\newtheorem{theorem}{Theorem}[section]
\newtheorem{proposition}[theorem]{Proposition}
\newtheorem{lemma}[theorem]{Lemma}
\newtheorem{remark}[theorem]{Remark}
\newtheorem{example}[theorem]{Example}
\newcommand{\N}{\mathbb{N}}
\newcommand{\R}{\mathbb{R}}
\newenvironment{proof}{{\bf Proof:}}{\hfill $\square$}
\begin{document}

\newgeometry{left = 3cm, right = 3cm, top = 3cm, bottom = 3cm}

\title{Barabanov norms, Lipschitz continuity and monotonicity for the max algebraic joint spectral radius}

\author{Nicola Guglielmi \thanks{Dipartimento di Ingegneria Scienze Informatiche e Matematica,
	Universita' degli Studi di L' Aquila, Italy}, Oliver Mason\thanks{Corresponding Author. Dept. of Mathematics and Statistics/Hamilton Institute, Maynooth University-National University of Ireland Maynooth, Maynooth, Co. Kildare, Ireland. email: oliver.mason@nuim.ie} and Fabian Wirth \thanks{University of Passau, 
		Faculty of Computer Science and Mathematics, Chair of Dynamical Systems, Germany}}

\date{\today}
\maketitle 

\begin{abstract}
We present several results describing the interplay between the max algebraic joint spectral radius (JSR) for compact sets of matrices and suitably defined matrix norms.  In particular, we extend a classical result for the conventional algebra, showing that the JSR can be described in terms of induced norms of the matrices in the set.  We also show that for a set generating an irreducible semigroup (in a cone-theoretic sense), a monotone Barabanov norm always exists.  This fact is then used to show that the max algebraic JSR is locally Lipschitz continuous on the space of compact irreducible sets of matrices with respect to the Hausdorff distance.  We then prove that the JSR is Hoelder continuous on the space of compact sets of nonnegative matrices.  Finally, we prove a strict monotonicity property for the max algebraic JSR that echoes a fact for the classical JSR.
\end{abstract}

\begin{center}
\emph{Keywords:} Max algebra, joint spectral radius, finiteness property, Barabanov norms.  MSC 2010: 15B48; 15A60.
\end{center}

\section{Introduction}
The joint spectral radius (JSR) plays a key role in a variety of areas, including the stability theory of difference and differential inclusions and wavelet analysis.  There is now a substantial body of work on the JSR and certain key questions and themes in its study are established.  For our purposes, the line of work concerning the existence and properties of special types of norms associated with the JSR \cite{Wirth1, Bar} is of particular relevance.  Specifically, we will be concerned with extremal and Barabanov norms; the latter are known to exist for compact sets of matrices that are irreducible in the representation-theoretic sense, meaning that the matrices have no non-trivial common invariant subspace.  Barabanov and extremal norms play a key role in the proofs of regularity and continuity for the JSR given in \cite{Wirth1, Wirth2}.  Recent work has shown that when the inclusion is positive with respect to a proper cone, it is possible to prove the existence of an extremal norm under the less restrictive condition of cone-irreducibility \cite{MasWir1}.  In particular, this is true for compact sets of nonnegative matrices and this fact was applied to differential inclusion models in epidemiology in \cite{Boketal14}.  

In this paper, we extend the work described in the previous paragraph to the setting of the max-algebra.  In keeping with \cite{Bapat} and other works in the field, we consider $\mathbb{R}_+$ equipped with the two operations: $a \oplus b = \max\{a, b\}$; $a \otimes b = ab$.  These operations extend to matrices and vectors with nonnegative entries in direct analogy with conventional linear algebra.  The joint spectral radius can be defined over the max algebra in a manner analogous to conventional algebra \cite{Pep1, Pep2,Lur2} and certain key results still hold in the new setting.  In particular, it has been shown that the Berger-Wang formula holds over the max algebra and that the JSR is continuous with respect to Hausdorff distance.  While max-algebraic induced norms have been introduced and studied in \cite{Lur1}, to date there has been no substantial work done on characterising extremal or Barabanov norms in this setting.  Furthermore, while continuity has been established for the JSR, it has not yet been shown that the max-algebraic JSR is Lipschitz continuous.  We will address both of the above questions in this paper, showing that Barabanov norms always exist for cone-irreducible sets of matrices and that the JSR is indeed Lipschitz continuous over the max algebra.  We will present a result on the monotonicity of the max algebraic JSR in the spirit of the work of \cite{Wirth2} . 

The outline and structure of the paper is as follows.  In the next section, we introduce our principal notation as well as recalling background on relevant concepts and results concerning matrices over the max algebra.  In Section \ref{sec:MaxJSR}, we recall the definition of the joint spectral radius for sets of matrices over the max algebra and introduce the concept of an irreducible semigroup of matrices in this setting.  We move on in Section \ref{sec:norms} to introduce extremal and Barabanov norms for max algebraic semigroups.  We show here that an irreducible semigroup always admits a Barabanov norm and, moreover, we explicitly describe such a norm for the case where it exists.  We then use this class of Barabanov norms to prove that the joint spectral radius is locally Lipschitz continuous on the space of irreducible, compact sets of nonnegative matrices.  This result is then strengthened further in Section \ref{sec:hoelder} where we show that the JSR is in fact Hoelder continuous on the space of compact subsets of $\mathbb{R}^{n \times n}_+$ (removing the conditions of irreducibility or $\mu(\Psi) > 0$ required for Lipschitz continuity), using the fact that the max algebraic joint spectral radius for a set of matrices is given by the max algebraic spectral radius of a single matrix associated with the set.  In Section \ref{sec:mon} we prove a monotonicity property for the max algebraic JSR before finally giving our concluding remarks in Section \ref{sec:conc}. 

\section{Background}
\label{sec:bg}
Throughout the paper, $\mathbb{R}^n$ and $\mathbb{R}^{n \times n}$ denotes the vector spaces of $n$-tuples of real numbers and of $n \times n$ real matrices respectively.  $\mathbb{R}^n_+$ ($\mathbb{R}^{n \times n}_+$) denote the cones of vectors (matrices) with nonnegative entries.  As usual, for $A \in \mathbb{R}^{n \times n}$, $A^T$ denotes the transpose of $A$.  We adopt the convention that $x \in \mathbb{R}^n$ is represented as a column vector with $x^T$ denoting the equivalent row-vector.  

We denote by $\mathbf{1}_{n \times n}$ the $n \times n$ matrix with all entries equal to one.  For a vector $x \in \mathbb{R}^n$, we denote by $|x|$ the vector whose $i$th component is given by $|x_i|$.  

We shall later work with absolute and monotone norms on $\mathbb{R}^{n}$.  Recall that a norm $\|\cdot\|$ is absolute if $\|x\| = \| |x|\|$ for all $x \in \mathbb{R}^n$.  The norm $\|\cdot\|$ is monotone if $|x| \leq |y|$ implies $\| x \| \leq \|y \|$ for $x, y$ in $\mathbb{R}^n$.  It is well known that a norm on $\mathbb{R}^n$ is absolute if and only if it is monotone. 

Given two vectors $x, y$ in $\mathbb{R}^n$, the notation $x \geq y$ indicates that $x_i \geq y_i$ for $1 \leq i \leq n$; $x > y$ means that $x \geq y$, $x \neq y$; $x \gg y$ means that $x_i > y_i$ for $1 \leq i\leq n$.  Analogous notation is also used for matrices. 

Given $A \in \mathbb{R}^{n \times n}_+$, the weighted directed graph $D(A)$ consists of the nodes $\{1, \ldots, n\}$ with an edge from $i$ to $j$ if and only if $a_{ij} > 0$.  The \emph{weight} of the edge $(i, j)$ is then given by $a_{ij}$.

We say that a matrix $A \in \mathbb{R}^{n \times n}$ is \emph{irreducible} if there is no non-trivial subset $I \subset \{1, \ldots n\}$ such that $a_{ij} = 0$ for all $i \in I$, $j \not\in I$.  $A$ is irreducible if and only if $D(A)$ is strongly connected.

\emph{Max algebraic spectral theory}

The \emph{max algebra} consists of the set $\mathbb{R}_+$ of nonnegative real numbers equipped with the two operations 
$$a\oplus b = \max\{a, b\}; \; a \otimes b = ab.$$
These operations can be extended to vectors and matrices with nonnegative entries in the obvious fashion and we use the same notation to denote the matrix-vector operations.  We denote by $A_\otimes^p$ ($p \in \mathbb{N}$) the $p$th max algebraic power of a matrix $A \in \mathbb{R}^{n \times n}_+$. 

The \emph{maximal cycle geometric mean} $\mu(A)$ of $A \in \mathbb{R}^{n \times n}_+$ plays a central role in the spectral theory of the max algebra.  This can be defined as 
\begin{equation}
\label{eq:mu1} \mu(A) = \max \{(a_{i_1i_2}a_{i_2i_3}\cdots a_{i_ki_1})^{1/k} \mid k \geq 1, 1\leq i_1, i_2, \ldots, i_k \leq n \}
\end{equation}
where $i_1, \ldots, i_k$ are distinct indices.  This describes the maximal geometric mean weight of the cycles in the graph $D(A)$.  It is not hard to see that $\mu(A) = \mu(A^T)$.

There is a well-developed spectral theory for matrices over the max algebra \cite{Els1, Bapat}, echoing the results of classical Perron-Frobenius theory for nonnegative matrices.  Given $A \in \mathbb{R}^{n \times n}_+$, $\lambda \geq 0$ is a max-eigenvalue of $A$ if there is some non-zero $x \in \mathbb{R}^n_+$ such that $$A \otimes x = \lambda x.$$  The vector $x$ is then a max eigenvector of $A$. 

For our purposes, the facts recorded in the following proposition will prove sufficient.

\begin{proposition}
\label{prop:PF1} Let $A \in \mathbb{R}^{n \times n}_+$ be given.  Then:
\begin{itemize}
\item[(i)] $\mu(A)$ is the largest max eigenvalue of $A$;
\item[(ii)] if $A$ is irreducible, $\mu(A)$ is the unique max eigenvalue of $A$ and all max eigenvectors $x$ of $A$ satisfy $x \gg 0$.  
\end{itemize}
\end{proposition}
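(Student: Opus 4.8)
The plan is to prove (i) by two independent arguments---that $\mu(A)$ is actually attained as a max eigenvalue, and that it dominates every max eigenvalue---and then to derive (ii) using the strong connectivity of $D(A)$. For the attainment half: if $\mu(A)=0$ then $D(A)$ is acyclic, hence has a source, so putting $x_j=1$ for every node $j$ with no incoming edge and $x_j=0$ otherwise gives a nonzero $x\in\mathbb{R}^n_+$ with $A\otimes x=0=\mu(A)x$. If $\mu(A)>0$, I would rescale to the matrix $B$ with entries $a_{ij}/\mu(A)$, so that $\mu(B)=1$ and the entries of the max powers $B_\otimes^k$ are bounded uniformly in $k$; the max-algebraic Kleene star $B^{*}=I\oplus B\oplus B_\otimes^2\oplus\cdots$ is then well defined and finite, with $(B^{*})_{ij}$ equal to the largest weight of a walk from $i$ to $j$. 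Fixing a node $c$ lying on a cycle of geometric mean $1$ (a \emph{critical} cycle), the column $v=B^{*}_{\cdot c}$ satisfies $B\otimes v=v$: one has $B\otimes B^{*}=B\oplus B_\otimes^2\oplus\cdots\le B^{*}$ entrywise, with equality forced in column $c$ because the critical cycle through $c$ gives $(B\otimes B^{*})_{cc}=1=(B^{*})_{cc}$, while for $i\ne c$ the two columns already agree at row $i$. Since $v_c=1$, this exhibits $\mu(A)$ as a max eigenvalue of $A$; this is the standard construction from max-algebraic Perron--Frobenius theory \cite{Els1,Bapat}.

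For the dominance half of (i), let $\lambda$ be a max eigenvalue with eigenvector $x$ (nonzero, in $\mathbb{R}^n_+$); the case $\lambda=0$ is trivial, so assume $\lambda>0$. Starting from any $i_1$ with $x_{i_1}>0$, the identity $\lambda x_{i_1}=\max_j a_{i_1 j}x_j$ forces an index $i_2$ with $a_{i_1 i_2}>0$ and $a_{i_1 i_2}x_{i_2}=\lambda x_{i_1}$, hence $x_{i_2}>0$; iterating produces a walk $i_1,i_2,\dots$ in $D(A)$ along which $a_{i_m i_{m+1}}x_{i_{m+1}}=\lambda x_{i_m}$ and every $x_{i_m}>0$. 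Finiteness of the index set forces the walk to revisit a node, producing a simple cycle $i_p\to\cdots\to i_q=i_p$; multiplying the defining relations around it and cancelling the positive factors $x_{i_m}$ leaves $\prod_{m=p}^{q-1}a_{i_m i_{m+1}}=\lambda^{\,q-p}$, a cycle of geometric mean $\lambda$, so $\mu(A)\ge\lambda$ by the definition of $\mu(A)$.

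For (ii), assume $A$ irreducible; the case $n=1$ is immediate, so take $n\ge 2$, whence $D(A)$ is strongly connected and in particular contains a cycle, forcing $\mu(A)>0$. Let $\lambda$ be a max eigenvalue with eigenvector $x$. First $\lambda>0$: if $\lambda=0$ then $A\otimes x=0$ forces $x_j=0$ at every node with an incoming edge, but strong connectivity on $\ge 2$ nodes gives every node an incoming edge, contradicting $x\ne 0$. Next $x\gg 0$: fixing $i$ with $x_i>0$ and, for an arbitrary node $j$, a path of some length $k$ from $j$ to $i$, we obtain $x_j=\lambda^{-k}(A_\otimes^k\otimes x)_j\ge\lambda^{-k}(A_\otimes^k)_{ji}\,x_i>0$. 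Finally $\lambda=\mu(A)$: iterating $A\otimes x=\lambda x$ yields $A_\otimes^k\otimes x=\lambda^k x$, hence $(A_\otimes^k)_{ii}\le\lambda^k$ for all $i,k$ (using $x_i>0$); taking $i$ to lie on a cycle realising $\mu(A)$ and $k$ to be its length gives $\mu(A)^k\le(A_\otimes^k)_{ii}\le\lambda^k$, so $\mu(A)\le\lambda$, which combined with (i) gives $\lambda=\mu(A)$, and hence $\mu(A)$ is the unique max eigenvalue.

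The point I expect to require the most care is the attainment half of (i): producing an explicit max eigenvector for $\mu(A)$ rests on the Kleene-star identities sketched above (chiefly $B\otimes B^{*}=B^{+}\le B^{*}$ together with the equality $(B\otimes B^{*})_{cc}=1$ at a critical node), and one should verify convergence/finiteness of $B^{*}$ carefully when $\mu(A)=1$. By contrast, the dominance statement and all of (ii) are comparatively routine, reducing to tracking walks and argmax relations in the weighted graph $D(A)$; if one prefers, the attainment of $\mu(A)$ as a max eigenvalue may simply be quoted from the cited max-algebraic spectral-theory references.
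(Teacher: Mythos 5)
Your proof is correct, but note that the paper does not prove this proposition at all: it is stated as background and attributed to the max-algebraic Perron--Frobenius literature (\cite{Els1,Bapat}), so there is no in-paper argument to compare against. What you have written is a sound, self-contained rendition of the standard proof. The attainment half of (i) via the normalised matrix $B=A/\mu(A)$, the Kleene star $B^{*}$, and the column of $B^{*}$ at a critical node is exactly the classical construction, and your key identities ($B\otimes B^{*}=B^{+}\le B^{*}$ with equality in column $c$ because $(B^{+})_{cc}=1=(B^{*})_{cc}$ at a critical node) are right; the finiteness of $B^{*}$ does follow from decomposing walks into a short path plus cycles of weight at most one, as you indicate. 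The dominance half of (i) by chasing argmax indices to a simple cycle and cancelling the positive $x_{i_m}$, and all three steps of (ii) ($\lambda>0$ from strong connectivity, $x\gg 0$ via a path from $j$ to $i$, and $\mu(A)\le\lambda$ from $(A_{\otimes}^{k})_{ii}\le\lambda^{k}$ at a node on a critical cycle), are all correct. Your closing suggestion is also the one the authors actually take: they simply quote the result.
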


\emph{Convex hulls and max-linear spans}

Motivated by problems in timed discrete event systems and combinatorial optimisation, several authors have studied notions of convexity in the max algebraic (or the isomorphic max-plus and min-plus) settings.  We will consider the following definition of convex hull which is that given in  \cite{Gaub2}.  For more background and examples on max algebraic and so-called \emph{tropical} convexity, see \cite{Gaub2, DevStur}.  

Given a subset $M$ of either $\mathbb{R}^n_+$ or $\mathbb{R}^{n \times n}_+$, we define the max-convex hull of $M$ to be
\begin{equation}
\label{eq:maxconvdef} \textrm{conv}_{\otimes} (M) := \left\{ \bigoplus_{i=1}^k \alpha_i x_i \mid k\in\mathbb{N}, x_i \in M, \alpha_i \geq 0, i=1,\ldots,k, \bigoplus_{i=1}^k \alpha_i  = 1\right\}.
\end{equation}

For example in Figure \ref{fig:trop1}, the lines in bold together with the shaded region denote the max convex hull of the points $a, b, c$.  
\begin{figure}
\centering
\includegraphics[width=8cm]{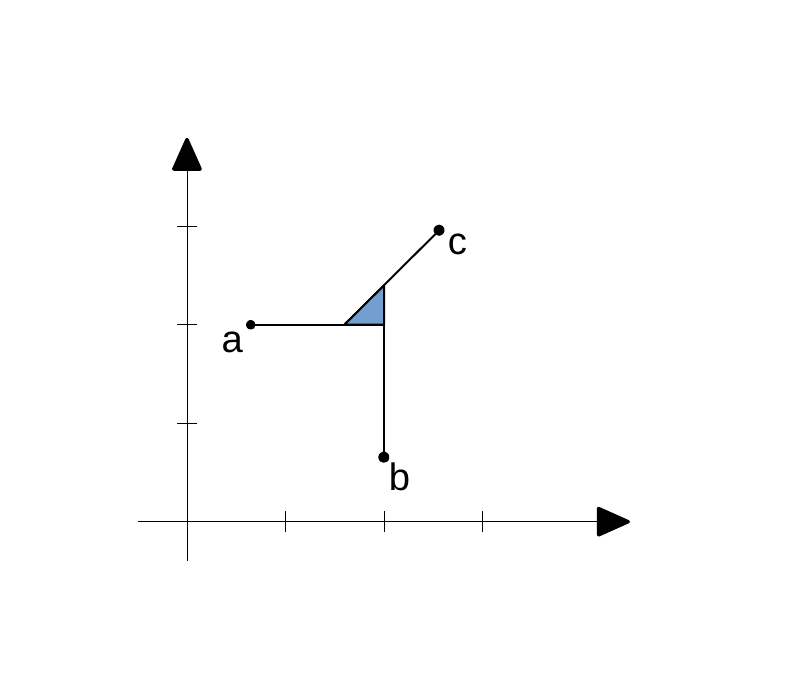}
\caption{A simple max algebraic convex hull}
\label{fig:trop1}
\end{figure}
The max algebraic analogue of a vector space spanned by $M$ is given by a max cone \cite{SergeiCone}.  Formally:
\begin{equation}
\label{eq:maxspandef} \textrm{span}_{\otimes} (M) := \left\{ \bigoplus_{i=1}^k \alpha_i x_i \mid k\in\mathbb{N}, x_i \in M, \alpha_i \geq 0, i=1,\ldots,k \right\}.
\end{equation}

Throughout the paper, we will assume that $\mathbb{R}^n_+$ is equipped with the relative topology inherited from the norm topology on $\mathbb{R}^n$.  

\section{The max-algebraic joint spectral radius - elementary properties}
\label{sec:MaxJSR}

Our particular interest is in extending results on the joint spectral radius, in the spirit of the papers \cite{Pep1, Pep2, Lur1, Lur2}.

\emph{The max-algebraic joint spectral radius}

Given a bounded set $\Psi \subseteq \mathbb{R}^{n \times n}_+$ of nonnegative matrices and an integer $m \geq 1$, we define the set
$$\Psi_{\otimes}^m := \{A_{i_1}\otimes A_{i_2} \otimes \cdots \otimes A_{i_m}\mid A_{i_j} \in \Psi, 1\leq j \leq m\}$$
consisting of all products of length $m$ formed from the matrices of $\Psi$. 

The semigroup $\mathcal{S}(\Psi)$ associated with $\Psi$ is then defined by 
$$\mathcal{S}(\Psi) := \bigcup_{m\geq 0} \Psi_{\otimes}^m,$$
where we set $\Psi_{\otimes}^0 = \{I\}$. 

The max algebraic generalised spectral radius $\mu(\Psi)$ was introduced in \cite{Lur1} and subsequently studied in a number of papers \cite{Lur2, Pep1, Pep2, Pep3}; it is defined by the formula:
\begin{equation}
\label{GSR} \mu(\Psi) : = \limsup_{m \rightarrow \infty} \left(\sup_{A \in \Psi_{\otimes}^m} \mu(A)\right)^{1/m}.
\end{equation}
Using a variety of techniques, it has been shown in \cite{Pep1, Lur1, Pep3} that the celebrated Berger-Wang formula also holds in the max algebra; formally:
\begin{equation}
\label{eq:BW} \mu(\Psi) = \lim_{m \rightarrow \infty} \left(\sup_{A \in \Psi_{\otimes}^m} \|A\|\right)^{1/m},
\end{equation}
where $\|\cdot \|$ can be any norm on $\mathbb{R}^{n \times n}$.  The quantity given by the right hand side of \eqref{eq:BW} is a max algebraic analogue of the joint spectral radius (JSR).  As the two quantities are equal, we will simply use the terminology joint spectral radius (JSR) in this paper.

One of the most striking contrasts between the max-algebraic and classical settings is that the max algebraic JSR is given by the max-algebraic spectral radius of a single matrix.  If we define 
\begin{equation}\label{eq:S}
S(\Psi) = \bigoplus_{A \in \Psi} A
\end{equation}
so that $S(\Psi)=[s_{ij}]_{i,j=1}^n$ with
$$s_{ij} = \sup_{A \in \Psi} a_{ij},$$
then 
\begin{equation}
\label{eq:Sres}
\mu(\Psi) = \mu(S(\Psi)).
\end{equation}
This result was first shown for finite $\Psi$ (albeit in the isomorphic max-plus setting) in \cite{Gaub} and then linear algebraic proofs for the finite and bounded cases were provided in \cite{Buket} and \cite{Pep3} respectively. 

Note that as $S \in \textrm{cl}\textrm{ conv}_{\otimes}(\Psi)$, it follows from \eqref{eq:Sres} that for any bounded set $\Psi$, 
\begin{equation}
\label{eq:clconv}
\mu(\textrm{cl}\textrm{ conv}_{\otimes}(\Psi)) = \mu(\Psi).
\end{equation}

\subsection{Elementary Properties}

A basic property of the JSR in classical algebra is that it is invariant under common similarity transformations.  Before showing that the analogous result holds for the max algebra, we recall the characterisation of invertible matrices in the max algebra \cite{SergeiInv}.

\begin{lemma}
\label{lem:invert} $P \in \mathbb{R}^{n \times n}_{+}$ is max-invertible if and only if there is a vector $v \gg 0$ and a permutation $\sigma$ of $\{1, \ldots, n\}$ such that
$$p_{ij} = \begin{cases}
		v_i & \mbox{if }\, j = \sigma(i)\\
        0 & \mbox{otherwise}.
 	\end{cases}$$
\end{lemma}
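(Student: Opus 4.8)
The plan is to prove both implications directly, with the forward (``only if'') direction carrying the bulk of the work. Throughout, write $I$ for the max-algebraic identity matrix, i.e.\ the matrix with $1$ on the diagonal and $0$ elsewhere (the unique matrix with $I\otimes x = x$ for all $x\in\mathbb{R}^n_+$), and take max-invertibility of $P$ to mean that there is $Q\in\mathbb{R}^{n\times n}_+$ with $P\otimes Q = Q\otimes P = I$.

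For the ``if'' direction, suppose $P$ has the stated monomial form determined by $v\gg 0$ and the permutation $\sigma$. I would simply exhibit the inverse: let $Q$ be the matrix with $q_{\sigma(i),i} = v_i^{-1}$ for $1\le i\le n$ and all other entries zero. Computing $(P\otimes Q)_{ik} = \max_j p_{ij}q_{jk}$ and $(Q\otimes P)_{ik} = \max_j q_{ij}p_{jk}$ — using that the only nonzero entry of row $i$ of $P$ lies in column $\sigma(i)$ and the only nonzero entry of column $i$ of $Q$ lies in row $\sigma(i)$, together with injectivity of $\sigma$ — yields $1$ when $i=k$ and $0$ otherwise, so both products equal $I$ and $P$ is max-invertible.

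For the ``only if'' direction, suppose $P\otimes Q = Q\otimes P = I$, and for each index $i$ let $J_i = \{j : p_{ij}>0\}$ be the support of row $i$ of $P$. The first step is to read off the structural constraints encoded by the two identities. From $(P\otimes Q)_{ii} = \max_j p_{ij}q_{ji} = 1$ we get $J_i\neq\emptyset$ for every $i$; symmetrically, from $(Q\otimes P)_{ii} = 1$ every row of $Q$ is nonzero. From the off-diagonal equations $(P\otimes Q)_{ik} = \max_j p_{ij}q_{jk} = 0$ for $i\neq k$ we obtain: whenever $j\in J_i$, the $j$th row of $Q$ can be nonzero only in column $i$; since that row is nonzero, in fact $q_{ji}>0$ and $q_{jk}=0$ for all $k\neq i$. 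Hence, for each fixed $i$, every $j\in J_i$ labels a row of $Q$ whose support is exactly $\{i\}$.

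The main step, and the one I expect to be the crux, is to upgrade this to $|J_i| = 1$ for every $i$ with the resulting map $i\mapsto\sigma(i)$ (defined by $\{\sigma(i)\} = J_i$) being a permutation. If $j_1,j_2\in J_i$ with $j_1\neq j_2$, then rows $j_1$ and $j_2$ of $Q$ both have support $\{i\}$, so $q_{j_1 i},q_{j_2 i}>0$ while $p_{ij_1},p_{ij_2}>0$; then $(Q\otimes P)_{j_1 j_2} \ge q_{j_1 i}\,p_{ij_2} > 0$, contradicting $(Q\otimes P)_{j_1 j_2} = 0$. Thus each $J_i = \{\sigma(i)\}$ and we may set $v_i := p_{i,\sigma(i)} > 0$. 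Injectivity of $\sigma$ follows by the same idea: if $\sigma(i_1) = \sigma(i_2) = j$ with $i_1\neq i_2$, then $j\in J_{i_1}\cap J_{i_2}$ forces row $j$ of $Q$ to have support both $\{i_1\}$ and $\{i_2\}$, which is impossible; since $\{1,\dots,n\}$ is finite, $\sigma$ is then a bijection. Hence $v = (v_1,\dots,v_n)\gg 0$ and $P$ has exactly the claimed form. I would write out the two support-chasing arguments carefully, since that is where the rigidity of max-algebraic cancellation is genuinely used; the remaining verifications are routine.
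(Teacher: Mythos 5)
Your proof is correct. Note, though, that the paper does not prove this lemma at all: it is recalled from the cited reference \cite{SergeiInv} and stated without argument, so there is no in-paper proof to compare against. Your self-contained argument is sound --- the ``if'' direction by exhibiting the monomial inverse (which agrees with the explicit formula for $P_{\otimes}^{-1}$ the paper records immediately after the lemma), and the ``only if'' direction by the support-chasing argument exploiting the absence of cancellation in $(\mathbb{R}_+,\max,\cdot)$: the off-diagonal equations force each row of $Q$ indexed by the support $J_i$ to have support exactly $\{i\}$, and the equations for $Q\otimes P$ then force $|J_i|=1$ and injectivity of the resulting map. This is the standard proof of the result and fills a gap the paper leaves to the literature.
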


If $P$ is invertible, it is not difficult to see that the inverse $Q = P_{\otimes}^{-1}$ is given by
$$q_{ij} = \begin{cases}
		\frac{1}{v_j} & \mbox{if }\, j = \sigma^{-1}(i)\\
        0 & \mbox{otherwise}.
 	\end{cases}$$
It is now relatively straightforward to establish the following fact.

\begin{proposition}
\label{prop:simil} Let $\Psi \subseteq \mathbb{R}^{n \times n}_+$ be compact and let $P \in \mathbb{R}^{n \times n}_+$ be max-invertible.  Then 
$$\mu(P\otimes \Psi \otimes P^{-1}) = \mu(\Psi).$$
\end{proposition}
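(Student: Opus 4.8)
The plan is to reduce the assertion to the single-matrix statement $\mu(P\otimes A\otimes P^{-1}) = \mu(A)$ for every $A \in \mathbb{R}^{n\times n}_+$, and then feed this into the definition \eqref{GSR} of $\mu(\Psi)$. First I would record the elementary fact that conjugation by $P$ is compatible with the max-algebraic product: for any $A, B \in \mathbb{R}^{n\times n}_+$,
\[
(P\otimes A\otimes P^{-1})\otimes(P\otimes B\otimes P^{-1}) = P\otimes(A\otimes B)\otimes P^{-1},
\]
which follows from associativity of $\otimes$ together with $P^{-1}\otimes P = I$ and $I\otimes A = A$. Iterating this identity shows that $(P\otimes\Psi\otimes P^{-1})_\otimes^m = \{P\otimes A\otimes P^{-1}\mid A\in\Psi_\otimes^m\}$ for every $m\ge1$, so $A\mapsto P\otimes A\otimes P^{-1}$ is a bijection between $\Psi_\otimes^m$ and $(P\otimes\Psi\otimes P^{-1})_\otimes^m$; since this map is continuous, $P\otimes\Psi\otimes P^{-1}$ is compact whenever $\Psi$ is.

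Next I would establish the single-matrix invariance. One clean route uses Proposition~\ref{prop:PF1}(i), that $\mu(A)$ is the largest max-eigenvalue of $A$: if $A\otimes x = \lambda x$ with $x > 0$, then $(P\otimes A\otimes P^{-1})\otimes(P\otimes x) = \lambda(P\otimes x)$, and $P\otimes x \neq 0$ because $P$ is invertible; applying the same observation to $P^{-1}$ shows that $A$ and $P\otimes A\otimes P^{-1}$ have exactly the same set of max-eigenvalues, hence the same maximal one. Alternatively, and perhaps more transparently, one can use Lemma~\ref{lem:invert} to compute directly that the entries of $B := P\otimes A\otimes P^{-1}$ are $b_{ij} = (v_i/v_j)\,a_{\sigma(i)\sigma(j)}$; then for any cycle $i_1\to i_2\to\cdots\to i_k\to i_1$ the factors $v_{i_l}/v_{i_{l+1}}$ telescope to $1$ around the cycle, so it has the same geometric mean weight as the cycle $\sigma(i_1)\to\cdots\to\sigma(i_k)\to\sigma(i_1)$ in $D(A)$, and $\mu(B) = \mu(A)$ follows from \eqref{eq:mu1} since $\sigma$ permutes the (simple) cycles bijectively.

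Finally I would combine the two steps. Substituting into \eqref{GSR} and using the bijection from the first step together with the invariance from the second,
\[
\mu(P\otimes\Psi\otimes P^{-1}) = \limsup_{m\to\infty}\Bigl(\sup_{B\in(P\otimes\Psi\otimes P^{-1})_\otimes^m}\mu(B)\Bigr)^{1/m} = \limsup_{m\to\infty}\Bigl(\sup_{A\in\Psi_\otimes^m}\mu(A)\Bigr)^{1/m} = \mu(\Psi).
\]
(An equivalent shortcut: distributivity of $\otimes$ over $\oplus$ gives $S(P\otimes\Psi\otimes P^{-1}) = P\otimes S(\Psi)\otimes P^{-1}$, after which one invokes \eqref{eq:Sres} and the single-matrix fact.) I do not expect a genuine obstacle here; the only points needing care are the bookkeeping in the conjugation identities and the telescoping of the $v_i/v_j$ factors around a cycle, both of which are routine.
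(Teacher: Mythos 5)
Your argument is correct, and the core computation is the same one the paper uses: conjugating by $P$ sends the $(i,j)$ entry to $(v_i/v_j)a_{\sigma(i)\sigma(j)}$, and the $v_i/v_j$ factors telescope around any cycle, so cycle geometric means are preserved and $\mu$ is invariant under max-similarity of a single matrix. Where you differ is in how this single-matrix fact is lifted to the set $\Psi$: the paper applies the telescoping argument once, to the matrix $\hat{S}=S(P\otimes\Psi\otimes P^{-1})$, and then invokes \eqref{eq:Sres} to conclude immediately; you instead prove the semigroup conjugation identity $(P\otimes\Psi\otimes P^{-1})_\otimes^m=\{P\otimes A\otimes P^{-1}\mid A\in\Psi_\otimes^m\}$ and feed the single-matrix invariance directly into the defining $\limsup$ in \eqref{GSR}. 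Your primary route is slightly longer but buys self-containedness --- it does not rely on the nontrivial identity $\mu(\Psi)=\mu(S(\Psi))$, and it would survive in settings where no analogue of $S(\Psi)$ is available; the paper's route (which coincides with your parenthetical shortcut via $S(P\otimes\Psi\otimes P^{-1})=P\otimes S(\Psi)\otimes P^{-1}$) is shorter precisely because it exploits that identity. Your alternative eigenvalue-based derivation of the single-matrix invariance via Proposition~\ref{prop:PF1}(i) is also valid and is not in the paper. No gaps.
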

\begin{proof} To see this, we note that for any $A \in \Psi$, the $i, j$ entry of $P\otimes A \otimes P^{-1}$ is given by 
$$\frac{v_i}{v_j}a_{\sigma(i), \sigma(j)}.$$
It follows that if we write 
$$\hat{S} = \bigoplus_{A \in P\otimes \Psi \otimes P^{-1}} A$$
then
\begin{equation}
\label{eq:hats} \hat{s}_{ij} = \frac{v_i}{v_j}s_{\sigma(i), \sigma(j)} 
\end{equation}
where $S$ is given by \eqref{eq:S}.  It is now straightforward to see that the permutation $\sigma$ defines a bijective correspondence between the cycles in $D(S)$ and those in $D(\hat{S})$ and that, moreover, the weight $\pi(C)$ of a cycle in $D(S)$ will be the same as the weight $\pi(\sigma(C))$ of the corresponding cycle in $D(\hat{S})$.  It follows immediately that 
$$\mu(S) = \mu(\hat{S})$$ and the result now follows from \eqref{eq:Sres}. 
\end{proof}

\emph{Irreducible Semigroups}

In keeping with the terminology adopted in \cite{MasWir1}, we say that the semigroup $\mathcal{S}(\Psi)$ is \emph{irreducible} if $\textrm{conv}_{\otimes}(\Psi_{\otimes}^m)$ contains an irreducible matrix for some $m \geq 0$.  

We next show that the semigroup $\mathcal{S}$ is irreducible if and only if the max-convex hull of $\Psi$ contains an irreducible matrix.  This provides a max algebraic version of Proposition 2.3 in \cite{MasWir1}.

\begin{lemma}
\label{lem:irred} Let $\Psi \subseteq \mathbb{R}^{n \times n}_+$ be a compact set of matrices.  The semigroup $\mathcal{S}(\Psi)$ is irreducible if and only if $\textrm{conv}(\Psi)$ contains an irreducible matrix.
\end{lemma}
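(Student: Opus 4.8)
The plan is to prove the two directions separately, with the forward direction being essentially immediate and the reverse direction carrying the real content. First suppose $\mathrm{conv}_\otimes(\Psi)$ contains an irreducible matrix. Then, taking $m = 1$ in the definition of irreducibility of the semigroup, $\mathrm{conv}_\otimes(\Psi_\otimes^1) = \mathrm{conv}_\otimes(\Psi)$ contains an irreducible matrix, so $\mathcal{S}(\Psi)$ is irreducible by definition. (Note the statement as typeset writes $\mathrm{conv}(\Psi)$; I read this as $\mathrm{conv}_\otimes(\Psi)$ to be consistent with the definition of irreducibility given just above the lemma.)

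For the converse, suppose $\mathcal{S}(\Psi)$ is irreducible, so there is some $m \geq 0$ and an irreducible matrix $B \in \mathrm{conv}_\otimes(\Psi_\otimes^m)$; I want to produce an irreducible matrix in $\mathrm{conv}_\otimes(\Psi)$. The key observation is the interaction between the max-convex hull operation, max-multiplication, and the "sup matrix" construction $S(\cdot)$ from \eqref{eq:S}. Specifically, I would first argue that $S(\Psi_\otimes^m) \leq S(\Psi)_\otimes^m$ entrywise (every length-$m$ product of matrices from $\Psi$ is entrywise $\leq$ the corresponding max-product of copies of $S(\Psi)$, since each $A_{i_j} \leq S(\Psi)$), and conversely $S(\Psi)_\otimes^m \leq S(\Psi_\otimes^m)$ because expanding the max-product $S(\Psi)_\otimes^m$ entry-by-entry gives a max over paths, and each path-weight is achieved (or exceeded) by choosing the matrices in $\Psi$ attaining the relevant entries of $S(\Psi)$; hence $S(\Psi_\otimes^m) = S(\Psi)_\otimes^m$. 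Next, since any element of $\mathrm{conv}_\otimes(M)$ is entrywise dominated by $S(M)$ and $S(M) \in \mathrm{cl\,conv}_\otimes(M)$, the matrix $B$ being irreducible forces $S(\Psi_\otimes^m) = S(\Psi)_\otimes^m$ to be irreducible as well (the support of $B$ is contained in that of $S(\Psi_\otimes^m)$, and irreducibility is equivalent to strong connectivity of $D(\cdot)$, which only depends on the zero/nonzero pattern; adding edges preserves strong connectivity).

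It then remains to deduce that $S(\Psi)$ itself is irreducible, i.e.\ that $D(S(\Psi))$ is strongly connected given that $D(S(\Psi)_\otimes^m)$ is. This is a standard graph-theoretic fact: there is an edge $i \to j$ in $D(A_\otimes^m)$ exactly when there is a walk of length $m$ from $i$ to $j$ in $D(A)$, so strong connectivity of the $m$-step reachability graph implies strong connectivity of $D(A)$ (if every node reaches every other in exactly $m$ steps, certainly every node reaches every other). Finally, $S(\Psi) \in \mathrm{cl\,conv}_\otimes(\Psi)$; I would either invoke that $\mathrm{conv}_\otimes(\Psi)$ is already closed when $\Psi$ is compact (true in low generality, but I would rather avoid relying on it), or better, note that irreducibility is an open condition on the zero pattern — a matrix $C$ with $D(C) \supseteq D(S(\Psi))$ as a spanning subgraph is irreducible — and pick $C \in \mathrm{conv}_\otimes(\Psi)$ close enough to $S(\Psi)$ that $c_{ij} > 0$ whenever $s_{ij} > 0$; such $C$ has $D(C)$ strongly connected and lies in $\mathrm{conv}_\otimes(\Psi)$, completing the proof.

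The main obstacle I anticipate is handling the closure: the cleanest route is the last remark above (irreducibility only needs the nonzero pattern to contain a strongly connected spanning subgraph, which is preserved under small perturbations), so that one never needs $\mathrm{conv}_\otimes(\Psi)$ to literally be closed. The identity $S(\Psi_\otimes^m) = S(\Psi)_\otimes^m$ is the technical heart and should be stated and proved carefully, but it is a routine "max over walks" computation rather than a genuine difficulty.
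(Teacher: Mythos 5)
Your proof is correct and follows essentially the same route as the paper: dominate the irreducible element of $\mathrm{conv}_{\otimes}(\Psi_{\otimes}^m)$ entrywise by $S(\Psi)_{\otimes}^m$, use that irreducibility depends only on the digraph to conclude that $S(\Psi)_{\otimes}^m$ and hence $S(\Psi)$ is irreducible, and then exhibit an irreducible element of $\mathrm{conv}_{\otimes}(\Psi)$. Your only deviation is the final perturbation step, which is valid but not needed: for compact $\Psi$ each entry $s_{ij}=\sup_{A\in\Psi}a_{ij}$ is attained by some $A^{(ij)}\in\Psi$, so $S(\Psi)=\bigoplus_{i,j}A^{(ij)}$ is a finite max-combination of elements of $\Psi$ and therefore lies in $\mathrm{conv}_{\otimes}(\Psi)$ itself, which is exactly what the paper uses.
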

\textbf{Proof:} It is immediate from the definition that if $\textrm{conv}_{\otimes}(\Psi)$ contains an irreducible element, then the semigroup $\mathcal{S}$ is irreducible.  Conversely, suppose that there is some $m > 0$ such that $\textrm{conv}_{\otimes}(\Psi_{\otimes}^m)$ contains an irreducible matrix $A$.  Then there exist matrices $A_1, \ldots A_p$ in $\Psi_{\otimes}^m$ and real numbers $\alpha_1, \ldots , \alpha_p$ with $\alpha_i \geq 0$, $\bigoplus_i \alpha_i = 1$ such that:
\begin{eqnarray*}
A &=& \alpha_1 A_1 \oplus \cdots \oplus \alpha_p A_p \\
&\leq& \alpha_1 S_{\otimes}^m \oplus \cdots \oplus \alpha_p S_{\otimes}^m \\
&=& S_{\otimes}^m.
\end{eqnarray*}
It follows immediately that $S_{\otimes}^m$ and hence $S$ is irreducible.  As $S \in \textrm{conv}_{\otimes}(\Psi)$ as $\Psi$ is closed, the result follows. $\square$

\section{Extremal Norms, Irreducibility and Lipschitz Continuity}
\label{sec:norms}
In this section, we recall \cite{Lur1} the max-algebraic version of an induced matrix norm and develop our results on extremal and Barabanov norms in the max algebraic setting. 

\emph{Max algebraic induced norms}

Given a norm $\|\cdot\|$ on $\mathbb{R}^n$, we follow \cite{Lur1} and define the max-induced norm $\eta_{\|\cdot \|}(A)$ of $A \in \mathbb{R}^{n \times n}_+$ as 
\begin{equation}
\label{eq:normind} \eta_{\|\cdot \|}(A) := \max_{x >0}\frac{\|A\otimes x\|}{\|x\|}.
\end{equation}
We shall use the notation $\|A\|$ to denote the usual induced matrix norm (constructed using conventional algebra). 

We now recall some useful facts from \cite{Lur1, Lur2}. 


\begin{lemma}\cite{Lur1}
\label{lem:monnorm} Let $\Psi \subseteq \mathbb{R}^{n \times n}_+$ be bounded.  The semigroup $\mathcal{S}(\Psi)$ is bounded if and only if there is a monotone norm $\|\cdot\|$ on $\mathbb{R}^n$ such that $\eta_{\|\cdot \|}(A) \leq 1$ for all $A \in \Psi$. 
\end{lemma}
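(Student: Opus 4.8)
The plan is to prove both directions of the equivalence, with the reverse direction being a routine boundedness estimate and the forward direction requiring an explicit construction of a monotone norm from the hypothesis that $\mathcal{S}(\Psi)$ is bounded.

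For the ``if'' direction, suppose $\|\cdot\|$ is a monotone norm on $\mathbb{R}^n$ with $\eta_{\|\cdot\|}(A) \leq 1$ for all $A \in \Psi$. The first observation is that the max-induced norm is submultiplicative: for $A, B \in \mathbb{R}^{n\times n}_+$ one has $\eta_{\|\cdot\|}(A \otimes B) \leq \eta_{\|\cdot\|}(A)\,\eta_{\|\cdot\|}(B)$, which follows directly from the definition \eqref{eq:normind} by the usual chain of inequalities $\|A\otimes B \otimes x\| \leq \eta_{\|\cdot\|}(A)\|B\otimes x\| \leq \eta_{\|\cdot\|}(A)\eta_{\|\cdot\|}(B)\|x\|$, valid because $B\otimes x > 0$ whenever $x > 0$ (or, if $B \otimes x$ has some zero entries, by a limiting/monotonicity argument). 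Hence every product $A_{i_1}\otimes\cdots\otimes A_{i_m} \in \Psi_\otimes^m$ has max-induced norm at most $1$. Since on the finite-dimensional space $\mathbb{R}^{n\times n}$ the map $\eta_{\|\cdot\|}$ is equivalent to any fixed matrix norm (up to constants, on the nonnegative matrices), this bounds $\mathcal{S}(\Psi)$ in norm, so $\mathcal{S}(\Psi)$ is bounded.

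For the ``only if'' direction, assume $\mathcal{S}(\Psi)$ is bounded, say $\|A\| \leq M$ for all $A \in \mathcal{S}(\Psi)$ in some fixed norm. I would define, for $x \in \mathbb{R}^n_+$,
\begin{equation}
\label{eq:normconstr}
\|x\| := \sup_{A \in \mathcal{S}(\Psi)} \| A \otimes x \|_\infty,
\end{equation}
which is finite by boundedness of $\mathcal{S}(\Psi)$ (note $I \in \mathcal{S}(\Psi)$ so $\|x\| \geq \|x\|_\infty > 0$ for $x \neq 0$), and extend it to all of $\mathbb{R}^n$ by $\|x\| := \| \,|x|\, \|$ so as to make it absolute. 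One then checks: positive homogeneity (immediate, since $A\otimes(\lambda x) = \lambda(A\otimes x)$ for $\lambda \geq 0$); the triangle inequality (using that $A \otimes (|x|+|y|) \leq A\otimes|x| + A\otimes|y|$ entrywise, together with monotonicity of $\|\cdot\|_\infty$, and then the triangle inequality for absolute norms via $|x+y| \leq |x|+|y|$); and monotonicity, i.e. $|x|\leq|y| \Rightarrow \|x\|\leq\|y\|$, which follows because $A\otimes$ preserves the order on $\mathbb{R}^n_+$ and $\|\cdot\|_\infty$ is monotone. Finally, for $B \in \Psi$ and $x > 0$, since $A \otimes B \in \mathcal{S}(\Psi)$ whenever $A \in \mathcal{S}(\Psi)$, we get $\|B\otimes x\| = \sup_{A}\|A\otimes B\otimes x\|_\infty \leq \sup_{A'}\|A'\otimes x\|_\infty = \|x\|$, so $\eta_{\|\cdot\|}(B)\leq 1$.

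The main obstacle is making sure the constructed object \eqref{eq:normconstr} is genuinely a norm on all of $\mathbb{R}^n$ rather than merely on the cone $\mathbb{R}^n_+$: the max-algebraic action $A\otimes x$ is only defined for $x \geq 0$, so the symmetrization step $\|x\| = \|\,|x|\,\|$ is essential, and one must verify that the resulting function still satisfies the triangle inequality off the positive orthant. This is where the cited equivalence ``absolute $\iff$ monotone'' from the Background section does the work: it suffices to show the restriction to $\mathbb{R}^n_+$ is subadditive, monotone and homogeneous, and then the absolute extension is automatically a monotone norm. Everything else is a routine verification, and the submultiplicativity of $\eta_{\|\cdot\|}$ is the one standing fact worth isolating as it is reused throughout the section.
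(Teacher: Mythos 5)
The paper does not prove this lemma at all --- it is quoted verbatim from \cite{Lur1} as a known fact --- so there is no in-paper argument to compare against. Your proof is correct and is essentially the standard construction used in the literature for such statements: for the ``only if'' direction the norm $\|x\| = \sup_{A \in \mathcal{S}(\Psi)}\|A\otimes x\|_\infty$ on $\mathbb{R}^n_+$, extended absolutely, is exactly the right object, and your verifications of subadditivity (via $A\otimes(x+y)\leq A\otimes x + A\otimes y$), monotonicity, positive definiteness (via $I\in\mathcal{S}(\Psi)$) and the invariance estimate $\|B\otimes x\|\leq\|x\|$ all go through. The ``if'' direction is also sound: submultiplicativity of $\eta_{\|\cdot\|}$ holds with the caveat you correctly flag about $B\otimes x$ possibly vanishing, and the only step stated a little loosely is the claim that $\eta_{\|\cdot\|}$ is ``equivalent to a matrix norm'' on nonnegative matrices --- what you actually need, and what is true, is just the lower bound $\eta_{\|\cdot\|}(A)\geq c\,\max_{i,j}a_{ij}$ obtained by testing against the standard basis vectors $e_j$ (since $A\otimes e_j$ is the $j$th column of $A$), which converts the uniform bound $\eta_{\|\cdot\|}\leq 1$ on $\mathcal{S}(\Psi)$ into a uniform entrywise bound. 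With that small tightening the argument is complete and self-contained.
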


\begin{lemma}\cite{Lur1}
\label{lem:bounded} Let $\Psi \subseteq \mathbb{R}^{n \times n}_+$ be bounded. If $\mu(A) \leq 1$ for all $A \in \mathcal{S}(\Psi)$, then $\mathcal{S}(\Psi)$ is bounded.  
\end{lemma}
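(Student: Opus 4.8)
The plan is to reduce the assertion to a boundedness property of the max-powers of the \emph{single} matrix $S = S(\Psi)$ introduced in \eqref{eq:S}, exploiting the identity \eqref{eq:Sres}. The first step is to observe that the hypothesis forces $\mu(\Psi) \le 1$: since $\Psi_\otimes^m \subseteq \mathcal{S}(\Psi)$ for every $m$, the assumption gives $\sup_{A \in \Psi_\otimes^m}\mu(A) \le 1$ for all $m$, and plugging this into the defining formula \eqref{GSR} yields $\mu(\Psi) = \limsup_{m\to\infty}\big(\sup_{A\in\Psi_\otimes^m}\mu(A)\big)^{1/m} \le 1$. By \eqref{eq:Sres} this is the same as $\mu(S) \le 1$.

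The core of the argument is then a purely combinatorial claim about one nonnegative matrix: if $\mu(S) \le 1$, then the sequence $\{S_\otimes^m : m \ge 0\}$ is bounded. To see this, recall that the $(i,j)$ entry of $S_\otimes^m$ is the largest product of edge weights over walks of length $m$ from $i$ to $j$ in $D(S)$. Once $m \ge n$, any such walk revisits a node and hence contains a closed subwalk; since every cycle in $D(S)$ has geometric mean weight at most $1$, the product of the weights along that closed subwalk is at most $1$, so deleting it cannot decrease the product over the remaining (shorter) walk. Iterating, the weight of any walk is dominated by the weight of a simple path from $i$ to $j$, and there are only finitely many such paths, each of length at most $n-1$; hence the entries of $S_\otimes^m$ are bounded by a constant depending only on $n$ and the largest entry of $S$, uniformly in $m$. (When $S$ is irreducible one can shortcut this via Proposition \ref{prop:PF1}: a strictly positive eigenvector $x$ of $S$ satisfies $S\otimes x = \mu(S)x \le x$, hence $S_\otimes^m\otimes x \le x$ for all $m$; but the walk estimate also covers the reducible case.)

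Finally I would transfer the bound to the whole semigroup. Every $A = A_{i_1}\otimes\cdots\otimes A_{i_m} \in \Psi_\otimes^m$ has each factor $A_{i_j} \le S$ entrywise, and max-algebraic matrix multiplication is monotone with respect to the entrywise order, so $A \le S_\otimes^m$ entrywise. Thus the entries of every element of $\mathcal{S}(\Psi)$ are bounded independently of the word length, and since all norms on $\mathbb{R}^{n\times n}$ are equivalent, $\mathcal{S}(\Psi)$ is bounded. The only step that demands genuine care is the combinatorial estimate on walk weights in $D(S)$ (in particular making the cycle-deletion argument work for reducible $S$); the reduction via \eqref{eq:Sres} and the transfer back to $\mathcal{S}(\Psi)$ are routine.
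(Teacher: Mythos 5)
The paper does not actually prove this lemma; it is quoted verbatim from \cite{Lur1}, so there is no in-paper argument to compare yours against. Judged on its own, your proof is correct and self-contained, and it fits naturally with the machinery the paper does develop: the reduction $\mu(\Psi)\le 1 \Rightarrow \mu(S)\le 1$ via \eqref{GSR} and \eqref{eq:Sres} is sound (the hypothesis gives $\sup_{A\in\Psi_\otimes^m}\mu(A)\le 1$ for every $m$, hence the $\limsup$ of the $m$-th roots is at most $1$), the monotonicity $A_{i_1}\otimes\cdots\otimes A_{i_m}\le S_\otimes^m$ is immediate, and the combinatorial heart of the argument is the standard walk-reduction: the $(i,j)$ entry of $S_\otimes^m$ is the maximal weight of a walk of length $m$, any repeated vertex isolates a simple cycle whose weight is at most $\mu(S)^{\mathrm{length}}\le 1$, and deleting it does not decrease the weight, so every entry is dominated by $\max\{1, \max_{k,l}s_{kl}\}^{n-1}$ uniformly in $m$. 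Two small points worth making explicit in a written-up version: (i) when $i=j$ the reduction terminates at the empty walk, so the diagonal entries of $S_\otimes^m$ are bounded by $1$ rather than by a simple-path weight; and (ii) one should delete the \emph{first} repeated-vertex segment so that the excised closed walk is genuinely a simple cycle, to which the bound $\mu(S)\le 1$ applies directly. Neither affects the validity of the argument, and, as you note, it covers the reducible case, which the eigenvector shortcut via Proposition \ref{prop:PF1} would not.
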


\begin{lemma}\cite{Lur2}
\label{lem:upperlower} Let $\Psi \subseteq \mathbb{R}^{n \times n}_+$ be bounded and let $\|\cdot\|$ be any norm on $\mathbb{R}^n$.  Then for all $m \geq 1$, 
\begin{equation}
\label{eq:upperlower} \left[ \sup_{A \in \Psi_{\otimes}^m} \mu(A) \right]^{1/m} \leq \mu(\Psi) \leq \left[ \sup_{A \in \Psi_{\otimes}^m} \eta_{\|\cdot \|}(A) \right]^{1/m} 
\end{equation}
\end{lemma}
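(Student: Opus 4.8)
The plan is to prove the two inequalities in \eqref{eq:upperlower} separately. The lower bound rests on the behaviour of the maximal cycle geometric mean under max-algebraic powers, while the upper bound rests on the submultiplicativity of the induced norm $\eta_{\|\cdot\|}$ combined with Fekete's subadditivity lemma.

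For the lower bound I would first record the elementary identity $\mu(A_{\otimes}^{k}) = \mu(A)^{k}$ for every $A \in \mathbb{R}^{n\times n}_{+}$ and $k \geq 1$. The inequality $\mu(A_{\otimes}^{k}) \geq \mu(A)^{k}$ is immediate from Proposition \ref{prop:PF1}(i): a max eigenvector of $A$ for $\mu(A)$ is also a max eigenvector of $A_{\otimes}^{k}$ for $\mu(A)^{k}$, so $\mu(A)^{k}$ is a max eigenvalue of $A_{\otimes}^{k}$. For the reverse inequality (assuming $\lambda := \mu(A_{\otimes}^{k})^{1/k} > 0$, the other case being trivial) one builds, from a nonnegative max eigenvector $y$ of $A_{\otimes}^{k}$ for $\mu(A_{\otimes}^{k})$, the vector $z = \bigoplus_{j=0}^{k-1}\lambda^{-j}(A_{\otimes}^{j}\otimes y)$, and checks by telescoping (using $A_{\otimes}^{k}\otimes y = \lambda^{k} y$) that $A \otimes z = \lambda z$ with $z \geq y \neq 0$; hence $\lambda$ is a max eigenvalue of $A$ and $\lambda \leq \mu(A)$ by Proposition \ref{prop:PF1}(i). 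Now fix $m \geq 1$ and $A \in \Psi_{\otimes}^{m}$; then $A_{\otimes}^{k} \in \Psi_{\otimes}^{mk}$ for all $k$, so $\sup_{B \in \Psi_{\otimes}^{mk}}\mu(B) \geq \mu(A)^{k}$, i.e. $\big(\sup_{B \in \Psi_{\otimes}^{mk}}\mu(B)\big)^{1/(mk)} \geq \mu(A)^{1/m}$ for all $k$. Since these quantities occur along the sequence whose $\limsup$ defines $\mu(\Psi)$ in \eqref{GSR}, we obtain $\mu(\Psi) \geq \mu(A)^{1/m}$, and taking the supremum over $A \in \Psi_{\otimes}^{m}$ gives the left-hand inequality.

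For the upper bound the two ingredients are: (a) $\mu(A) \leq \eta_{\|\cdot\|}(A)$ for every $A \in \mathbb{R}^{n\times n}_{+}$, obtained by evaluating the quotient in \eqref{eq:normind} at a nonnegative max eigenvector of $A$ associated with $\mu(A)$ (which exists by Proposition \ref{prop:PF1}(i)); and (b) submultiplicativity $\eta_{\|\cdot\|}(A\otimes B) \leq \eta_{\|\cdot\|}(A)\,\eta_{\|\cdot\|}(B)$, which is the standard argument for induced norms applied to $A \otimes (B\otimes x)$. Put $c_{m} := \sup_{A \in \Psi_{\otimes}^{m}}\eta_{\|\cdot\|}(A)$; boundedness of $\Psi$ gives $c_{1} < \infty$ and (b) gives $c_{p+q} \leq c_{p}c_{q}$, so by Fekete's lemma $c_{p}^{1/p}$ converges, with limit $\inf_{p} c_{p}^{1/p} \leq c_{m}^{1/m}$ for each fixed $m$. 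Combining this with (a) and \eqref{GSR},
\[
\mu(\Psi) = \limsup_{p\to\infty}\Big(\sup_{A\in\Psi_{\otimes}^{p}}\mu(A)\Big)^{1/p} \;\leq\; \limsup_{p\to\infty} c_{p}^{1/p} \;=\; \lim_{p\to\infty} c_{p}^{1/p} \;\leq\; c_{m}^{1/m},
\]
which is the right-hand inequality.

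I expect the bookkeeping with the $\limsup$ in \eqref{GSR} to be entirely routine; the only points requiring genuine care are the preliminary facts. In particular, the reverse inequality $\mu(A_{\otimes}^{k}) \leq \mu(A)^{k}$ needs the eigenvector-lifting construction above together with a separate (trivial) treatment of degenerate cases, and for (a) and (b) one should verify that the maximum in \eqref{eq:normind} is actually attained and is unaffected by vectors $x > 0$ with $A \otimes x = 0$, so that submultiplicativity holds without extra hypotheses.
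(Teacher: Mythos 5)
The paper does not prove this lemma at all: it is quoted verbatim from the reference [Lur2] (Lur and Yang, \emph{Continuity of the generalized spectral radius in max algebra}), so there is no in-paper argument to compare yours against. Judged on its own, your proof is correct and self-contained. The left inequality via the identity $\mu(A_{\otimes}^{k})=\mu(A)^{k}$ is sound: the easy direction follows from pushing a max eigenvector of $A$ through $A_{\otimes}^{k}$, and your eigenvector-lifting vector $z=\bigoplus_{j=0}^{k-1}\lambda^{-j}(A_{\otimes}^{j}\otimes y)$ does satisfy $A\otimes z=\lambda z$ by the telescoping you describe, giving the reverse direction; combining with $A_{\otimes}^{k}\in\Psi_{\otimes}^{mk}$ and taking the $\limsup$ along the subsequence $p=mk$ is exactly right. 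The right inequality via $\mu(A)\leq\eta_{\|\cdot\|}(A)$ (evaluate the quotient at a max eigenvector, which exists by Proposition \ref{prop:PF1}(i)), submultiplicativity of $\eta_{\|\cdot\|}$, and Fekete's lemma is also correct; you rightly flag the only delicate points, namely that vectors with $A\otimes x=0$ or $B\otimes x=0$ contribute zero to the supremum so submultiplicativity is unaffected, and that attainment of the maximum in \eqref{eq:normind} follows from homogeneity plus compactness of the nonnegative part of the unit sphere (though attainment is not actually needed anywhere in your argument). One could alternatively get $\mu(A_{\otimes}^{k})=\mu(A)^{k}$ directly from the cycle characterisation \eqref{eq:mu1}, but your spectral argument is cleaner given the tools the paper makes available.
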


The above results allow us to establish the following fact which provides a max-algebraic version of a classical result concerning the standard joint spectral radius. 

\begin{proposition}
\label{prop:Norm} Let $\Psi$ be a bounded subset of $\mathbb{R}^{n \times n}_+$.  Then
\begin{equation}
\label{eq:normchar} \mu(\Psi) = \inf_{\|\cdot\|} \sup_{A \in \Psi} \eta_{\|\cdot \|}(A).
\end{equation}
The infimum is over all norms $\|\cdot \|$ on $\mathbb{R}^n$.
\end{proposition}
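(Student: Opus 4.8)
The plan is to establish the two inequalities $\mu(\Psi) \leq \inf_{\|\cdot\|} \sup_{A \in \Psi} \eta_{\|\cdot\|}(A)$ and $\mu(\Psi) \geq \inf_{\|\cdot\|} \sup_{A \in \Psi} \eta_{\|\cdot\|}(A)$ separately. The first inequality is the easy direction: for any fixed norm $\|\cdot\|$ on $\mathbb{R}^n$, apply Lemma \ref{lem:upperlower} with $m = 1$, which gives $\mu(\Psi) \leq \sup_{A \in \Psi} \eta_{\|\cdot\|}(A)$ directly. Taking the infimum over all norms $\|\cdot\|$ preserves the inequality, so $\mu(\Psi) \leq \inf_{\|\cdot\|} \sup_{A \in \Psi} \eta_{\|\cdot\|}(A)$.

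For the reverse inequality, the idea is the standard rescaling trick adapted to the max algebra. Fix $\varepsilon > 0$ and set $\rho = \mu(\Psi) + \varepsilon$. Consider the rescaled set $\Psi' = \frac{1}{\rho}\Psi = \{\frac{1}{\rho}A : A \in \Psi\}$; since scaling each matrix by $1/\rho$ scales every cycle weight and hence every $\mu$-value by $1/\rho$, one checks that $\mu(\Psi') = \mu(\Psi)/\rho < 1$. By the max-algebraic Berger--Wang formula \eqref{eq:BW}, or more directly by Lemma \ref{lem:bounded} applied after noting $\mu(A) \le \mu(\Psi') < 1$ for all $A$ in the semigroup (using Lemma \ref{lem:upperlower}'s lower bound to control $\mu$ on products), the semigroup $\mathcal{S}(\Psi')$ is bounded. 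Then Lemma \ref{lem:monnorm} yields a monotone norm $\|\cdot\|$ on $\mathbb{R}^n$ with $\eta_{\|\cdot\|}(A') \leq 1$ for all $A' \in \Psi'$, i.e. $\eta_{\|\cdot\|}(\frac{1}{\rho}A) \le 1$, hence $\eta_{\|\cdot\|}(A) \le \rho$ for all $A \in \Psi$ by homogeneity of $\eta$. Therefore $\inf_{\|\cdot\|}\sup_{A \in \Psi}\eta_{\|\cdot\|}(A) \le \rho = \mu(\Psi) + \varepsilon$, and letting $\varepsilon \to 0$ completes the argument.

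The step I expect to require the most care is verifying that $\mathcal{S}(\Psi')$ is bounded, i.e. checking that the hypotheses of Lemma \ref{lem:bounded} are met. Lemma \ref{lem:bounded} requires $\mu(A) \le 1$ for \emph{all} $A \in \mathcal{S}(\Psi')$, not just for $A \in \Psi'$. To see this, note that if $A \in (\Psi')_\otimes^m$ then $\mu(A)^{1/m} \le \mu(\Psi')$ by the lower bound in Lemma \ref{lem:upperlower}, so $\mu(A) \le \mu(\Psi')^m \le 1$ whenever $\mu(\Psi') \le 1$. Strictly we have $\mu(\Psi') < 1$, which certainly suffices. One small subtlety is that $\Psi'$ must be bounded for the cited lemmas to apply, but this is immediate since $\Psi$ is bounded. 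Also worth a remark: the maximum in \eqref{eq:normind} defining $\eta_{\|\cdot\|}$ is over $x > 0$ rather than all nonzero $x$, but this does not affect the positive homogeneity property $\eta_{\|\cdot\|}(cA) = c\,\eta_{\|\cdot\|}(A)$ for $c \ge 0$ that we invoke when undoing the rescaling.

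Finally, I would note that the infimum in \eqref{eq:normchar} is in general not attained unless one imposes irreducibility (or cone-irreducibility) on $\Psi$, which foreshadows the extremal and Barabanov norm results developed in the remainder of the section; it may be worth flagging this explicitly after the proof so the reader sees why the subsequent material is needed.
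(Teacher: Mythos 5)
Your proof is correct, and the core of it coincides with the paper's argument: the easy inequality via Lemma \ref{lem:upperlower} with $m=1$, then a rescaling followed by Lemma \ref{lem:bounded} and Lemma \ref{lem:monnorm} to produce a monotone norm witnessing the reverse inequality. The one genuine difference is how the degenerate case is handled. The paper rescales by $\mu(\Psi)$ itself, which forces a separate treatment of $\mu(\Psi)=0$: there it perturbs $S$ to the irreducible matrix $S_\delta = S + \delta\mathbf{1}_{n\times n}$, takes a left Perron eigenvector $v \gg 0$ of $S_\delta^T$, and builds the norm $\|x\|_\epsilon = v^T\otimes|x|$ directly. You instead rescale by $\rho = \mu(\Psi)+\varepsilon$, which is always positive, so both cases are absorbed into a single argument at the cost of a final limit $\varepsilon\to 0$. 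Your version is cleaner and avoids invoking Proposition \ref{prop:PF1}; what it gives up is the explicit eigenvector-based norm that the paper's zero case constructs, a construction that foreshadows the Barabanov norms of Theorem \ref{thm:Ext}. Your verification of the hypotheses of Lemma \ref{lem:bounded} (that $\mu(A)\le\mu(\Psi')^m<1$ for $A\in(\Psi')_\otimes^m$ via the lower bound in Lemma \ref{lem:upperlower}) is exactly the check the paper performs for $\hat\Psi$, and your appeal to positive homogeneity of $\eta_{\|\cdot\|}$ to undo the rescaling is sound.
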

\textbf{Proof:}  It follows immediately from Lemma \ref{lem:upperlower} by taking $m=1$ that 
\begin{equation}\label{eq:nchar1}
\mu(\Psi) \leq \inf_{\|\cdot\|} \sup_{A \in \Psi} \eta_{\|\cdot \|}(A).
\end{equation}
For the converse, we first consider the case $\mu(\Psi) = 0$ and note that this implies that $\mu(S) = 0$ by \eqref{eq:Sres}; hence $\mu(S^T) = 0$ also.  Let $\epsilon > 0$ be given.  Choose $\delta > 0$ so that $\mu(S_{\delta}) < \epsilon$ where $S_{\delta} = S + \delta \mathbf{1}_{n \times n}$.  Clearly $S_{\delta}$ is irreducible and it follows from Proposition~\ref{prop:PF1} that there is a vector $v \gg 0$ such that $$S_{\delta}^T \otimes v = \mu(S_{\delta}^T)v < \epsilon v.$$  

Now consider the monotone norm defined by setting $\|x \|_{\epsilon} = v^T \otimes x$ for $x \in \mathbb{R}^n_+$ and defining $\|x\|_{\epsilon} = \| |x|\|_{\epsilon}$ for $x \not \in \mathbb{R}^n_+$.  A straightforward calculation shows that 
\begin{equation}\label{eq:etaeps1}
\|A \otimes x \|_{\epsilon} \leq \|S \otimes x \|_{\epsilon} \leq \epsilon \|x\|_{\epsilon}
\end{equation}
for all $x \in \mathbb{R}^n_+$, $A \in \Psi$ and hence that 
$$\eta_{\|\cdot\|_{\epsilon}}(A) \leq \epsilon.$$
As $\epsilon > 0$ was arbitrary, we can conclude that 
$$\inf_{\|\cdot\|} \sup_{A \in \Psi} \eta_{\|\cdot \|}(A) = 0$$ so that \eqref{eq:normchar} holds in the case $\mu(\Psi) = 0$.

Now suppose that $\mu(\Psi) > 0$.  As above, this means that $\mu(S) > 0$.  Consider the set 
$$\hat{\Psi} := \left\{\frac{A}{\mu(\Psi)} \mid A \in \Psi \right\}.$$
Then $\mu(\hat{\Psi}) = 1$ and if we set $\hat{S} = \bigoplus_{A \in \hat{\Psi}} A$, we have $\mu(\hat{S}) = 1$.  For any $m \geq 1$ and any $B \in \hat{\Psi}^m_{\otimes}$, we have
\begin{equation}
\mu(B) \leq \mu(\hat{S}_{\otimes}^m) = \mu(\hat{S})^m = 1.
\end{equation}
It follows from Lemma \ref{lem:bounded} that the semigroup $\mathcal{S}(\hat{\Psi})$ is bounded.  Lemma \ref{lem:monnorm} now implies that there is some monotone norm $\|\cdot\|$ such that 
$$\eta_{\|\cdot\|}(B) \leq \mu(\hat{\Psi}) = 1$$
for all $B \in \hat{\Psi}$.  It now follows from the definition of $\hat{\Psi}$ that
$$\eta_{\|\cdot\|}(A) \leq \mu(\Psi)$$
for all $A \in \Psi$ and hence that 
$$\inf_{\|\cdot\|} \sup_{A \in \Psi} \eta_{\|\cdot \|}(A) \leq \mu(\Psi).$$
Combining this with \eqref{eq:nchar1} completes the proof. $\square$

\emph{Extremal and Barabanov norms}

In conventional algebra, extremal and Barabanov norms play an important role in the analysis of the joint spectral radius and of the stability of difference inclusions \cite{Wirth1, Wirth2, Jun1,Koz1,Gug1}.  We next consider max-algebraic versions of these concepts. 

The norm $\nu$ is said to be \emph{extremal} for a set $\Psi$ of matrices in $\mathbb{R}^{n\times n}_+$ if 
\begin{equation}
\label{eq:Ext1} \nu(A \otimes x) \leq \mu(\Psi) \nu(x) \quad \forall A \in \Psi, x \in \mathbb{R}^n_+.
\end{equation}
If in addition, for every $x \in \mathbb{R}^n_+$, there is some $A \in \Psi$ with 
\begin{equation}
\label{eq:Bar1} \nu(A \otimes x) = \mu(\Psi) \nu(x),
\end{equation}
the norm is said to be a \emph{Barabanov norm} for $\Psi$.

A careful examination of the proof of Proposition \ref{prop:Norm} reveals that it establishes the existence of an extremal norm for any bounded set $\Psi \subseteq \mathbb{R}^{n \times n}_+$ with $\mu(\Psi) > 0$.  Our next result develops on this by providing a max-algebraic version of the recent work in \cite{MasWir1}; this latter paper showed that, for the conventional algebra, extremal norms always exist for compact sets of nonnegative matrices generating an irreducible semigroup.  For the max algebra, under analogous hypotheses, a Barabanov norm exists.  Moreover, this norm can be explicitly characterised in max-algebraic spectral terms. 

\begin{theorem}
\label{thm:Ext} Let $\Psi$ be a compact set in $\mathbb{R}^{n \times n}_+$ such that the semigroup $\mathcal{S}(\Psi)$ is irreducible.  There exists a monotone \emph{Barabanov} norm $\nu$ for $\Psi$ of the form
\begin{equation}\label{eq:barform}
\nu(x) = v^T \otimes |x|
\end{equation}
where $v \gg 0$.  
\end{theorem}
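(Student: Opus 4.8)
The plan is to construct $\nu$ directly from a strictly positive left max-eigenvector of the matrix $S=S(\Psi)$ from \eqref{eq:S}, using the max-algebraic Perron--Frobenius facts in Proposition~\ref{prop:PF1}. First I would note that by Lemma~\ref{lem:irred} --- in fact by the argument in its proof, which produces an irreducible $S$ itself whenever $\mathcal{S}(\Psi)$ is irreducible --- the matrix $S$ is irreducible; since strong connectedness of $D(S)$ forces the existence of a cycle with positive weight, we also have $\mu:=\mu(\Psi)=\mu(S)>0$. As $S^T$ is irreducible with $\mu(S^T)=\mu(S)$, Proposition~\ref{prop:PF1}(ii) supplies $v\gg 0$ with $S^T\otimes v=\mu v$, equivalently $\max_i s_{ij}v_i=\mu v_j$ for each $j$. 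Define $\nu(x):=v^T\otimes|x|=\max_i v_i|x_i|$; this is a monotone norm on $\mathbb{R}^n$, the norm axioms and monotonicity being immediate from $v\gg 0$.

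Extremality \eqref{eq:Ext1} is then a short computation. For $x\in\mathbb{R}^n_+$ and $A\in\Psi$ we have $(A\otimes x)_i=\max_j a_{ij}x_j\le\max_j s_{ij}x_j$, hence $\nu(A\otimes x)\le\max_{i,j}v_i s_{ij}x_j=\max_j\bigl(\max_i v_i s_{ij}\bigr)x_j=\max_j\mu v_jx_j=\mu\,\nu(x)$.

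For the Barabanov property \eqref{eq:Bar1} I would argue pointwise. Given $x\in\mathbb{R}^n_+$ with $\nu(x)>0$ (the case $x=0$ being trivial), choose $j^\ast$ attaining $v_{j^\ast}x_{j^\ast}=\nu(x)$ --- note $x_{j^\ast}>0$ --- and then $i^\ast$ attaining $s_{i^\ast j^\ast}v_{i^\ast}=\mu v_{j^\ast}$. Since $\Psi$ is compact, the supremum $s_{i^\ast j^\ast}=\sup_{A\in\Psi}a_{i^\ast j^\ast}$ is attained at some $A\in\Psi$, and for that $A$ we get $\nu(A\otimes x)\ge v_{i^\ast}a_{i^\ast j^\ast}x_{j^\ast}=\mu v_{j^\ast}x_{j^\ast}=\mu\,\nu(x)$; combined with the extremal inequality this forces equality.

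I expect the only genuinely delicate point to be the last one: for the equality to hold with a matrix \emph{belonging to $\Psi$} (rather than merely to its closure or max-convex hull), the relevant entrywise supremum $s_{i^\ast j^\ast}$ must actually be realised inside $\Psi$, which is exactly what compactness --- as opposed to mere boundedness --- buys us. Everything else is a routine repackaging of Lemma~\ref{lem:irred}, Proposition~\ref{prop:PF1}, and the order-preserving nature of $\otimes$ on nonnegative matrices.
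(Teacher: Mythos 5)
Your proposal is correct and follows essentially the same route as the paper: both build $\nu(x)=v^T\otimes|x|$ from a strictly positive left max-eigenvector of $S(\Psi)$ (via Lemma~\ref{lem:irred} and Proposition~\ref{prop:PF1}), get extremality from $A\otimes x\le S\otimes x$, and obtain the Barabanov equality by using compactness of $\Psi$ to realise a single maximising entry $s_{pq}$ of $S$ by some $A\in\Psi$. The only cosmetic difference is how the maximising index pair is located --- you pick $j^\ast$ from $\nu(x)$ and $i^\ast$ from the eigen-equation, while the paper takes the argmax of $\max_{i,j}v_is_{ij}x_j$ directly --- which amounts to the same thing.
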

\textbf{Proof:}  As $\Psi$ generates an irreducible semigroup, it follows from Lemma \ref{lem:irred} that there is some irreducible $A$ in $\textrm{conv}_{\otimes}(\Psi)$.  This in turn implies that the matrix 
$$S = \bigoplus_{A \in \Psi} A$$
and hence $S^T$ is irreducible.  

From Proposition \ref{prop:PF1}, there exists some vector $v \gg 0$ such that 
\begin{equation}
\label{eq:PF1} v^T \otimes S = \mu(S) v^T = \mu(\Psi) v^T.
\end{equation}
We claim that defining $\nu(x) = v^T \otimes x$ for $x \in \mathbb{R}^n_+$ and $\nu(x) = \nu(|x|)$ in general, yields a monotone Barabanov norm for $\Psi$.  The properties of a monotone norm follow readily from the definition, noting that $v \gg 0$. 

First note that for all $A \in \Psi$, $x \in \mathbb{R}^n_+$:
$$A \otimes x \leq S \otimes x$$
from which it follows immediately that
\begin{eqnarray*}
\nu(A \otimes x ) &=& v^T \otimes A \otimes x \\
&\leq& v^T \otimes S \otimes x \\
&=& \mu(\Psi) v^T \otimes x \\
&=& \mu(\Psi) \nu(x).
\end{eqnarray*}
Thus $\nu(\cdot)$ is certainly an extremal norm for $\Psi$.  

To see that it is in fact a Barabanov norm, let $x > 0$ be given.  We need to show that there is some $A \in \Psi$ with $$\nu(A \otimes x) = \mu(\Psi) \nu(x).$$
First note that as 
$$\mu(\Psi) \nu(x) = \mu(S) v^T \otimes x = v^T \otimes S \otimes x$$
it is enough to show that there is some $A$ in $\Psi$ with 
$$v^T \otimes A \otimes x = v^T \otimes S \otimes x.$$
This follows as 
\begin{eqnarray*}
v^T \otimes S \otimes x &=& \max_{i, j} \{v_i s_{ij} x_j\} \\
	&=& v_p s_{pq}x_q
\end{eqnarray*}
for some indices $p$, $q$.  Now as $\Psi$ is compact, there is some $A$ in $\Psi$ with $a_{pq} = s_{pq}$ so that 
$$v_p a_{pq}x_q = v_p s_{pq}x_q = \max_{i, j} \{v_i s_{ij} x_j\}.$$
It follows readily that 
\begin{eqnarray*}
v^T \otimes A \otimes x &=& \max_{i, j} \{v_i a_{ij} x_j\}\\
		&=& v_p a_{pq}x_q\\
        &=& v^T \otimes S \otimes x
\end{eqnarray*}
as required. $\square$

It is also possible to prove a converse to the above result that echoes known results on Barabanov norms for the conventional algebra. 

\begin{proposition}
\label{prop:Barabconv} Let $\Psi$ be a compact set in $\mathbb{R}^{n \times n}_+$ such that the semigroup $\mathcal{S}(\Psi)$ is irreducible.  If there exists a monotone norm $\nu(\cdot)$ such that 
\begin{equation}\label{eq:conv1}
\mu \nu(x) = \max_{A \in \Psi} \nu(A \otimes x), \, \forall x \in \mathbb{R}^n_+
\end{equation}
then $\mu = \mu(\Psi).$
\end{proposition}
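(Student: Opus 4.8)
The plan is to prove the two inequalities $\mu(\Psi)\le\mu$ and $\mu\le\mu(\Psi)$ separately. For the first one I would note that \eqref{eq:conv1} immediately yields the extremal estimate $\nu(A\otimes x)\le\mu\,\nu(x)$ for every $A\in\Psi$ and every $x\in\mathbb{R}^n_+$. Writing an arbitrary $B\in\Psi_\otimes^m$ as $B=A_1\otimes\cdots\otimes A_m$ and noting that $A_2\otimes\cdots\otimes A_m\otimes x\in\mathbb{R}^n_+$, a trivial induction on $m$ gives $\nu(B\otimes x)\le\mu^m\nu(x)$ for all $x\in\mathbb{R}^n_+$, hence $\eta_\nu(B)\le\mu^m$; in particular $\sup_{A\in\Psi}\eta_\nu(A)\le\mu$. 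Lemma~\ref{lem:upperlower} with $m=1$ and the norm $\nu$ then gives $\mu(\Psi)\le\mu$. If $\mu=0$ this already forces $\mu(\Psi)=0=\mu$, so from here on I would assume $\mu>0$.

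For the reverse inequality I would exploit the attainment clause of \eqref{eq:conv1} to construct an extremal trajectory. Fix any nonzero $x_0\in\mathbb{R}^n_+$ (e.g.\ $x_0=\mathbf 1$). By \eqref{eq:conv1} there is $A_1\in\Psi$ with $\nu(A_1\otimes x_0)=\mu\,\nu(x_0)$; set $x_1=A_1\otimes x_0$, which is nonzero since $\nu(x_1)=\mu\,\nu(x_0)>0$. Iterating, I obtain $A_1,\dots,A_m\in\Psi$ and vectors $x_k=A_k\otimes x_{k-1}\in\mathbb{R}^n_+\setminus\{0\}$ with $\nu(x_k)=\mu\,\nu(x_{k-1})$, so that the product $B_m:=A_m\otimes\cdots\otimes A_1\in\Psi_\otimes^m$ satisfies $\nu(B_m\otimes x_0)=\mu^m\nu(x_0)$. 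Since $x_0>0$, this gives $\eta_\nu(B_m)\ge\mu^m$ (and, combined with the previous paragraph, in fact $\eta_\nu(B_m)=\mu^m$).

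It then remains to turn the bound $\eta_\nu(B_m)\ge\mu^m$ into a lower bound for $\mu(\Psi)$, for which I would compare the max-induced norm with a classical matrix norm. For $A\in\mathbb{R}^{n\times n}_+$ and $x\in\mathbb{R}^n_+$ one has $A\otimes x\le Ax$ entrywise, so monotonicity of $\nu$ gives $\nu(A\otimes x)\le\nu(Ax)\le\|A\|_\nu\,\nu(x)$ and hence $\eta_\nu(A)\le\|A\|_\nu$, where $\|\cdot\|_\nu$ denotes the classical induced norm. By equivalence of norms there is $c>0$ with $\|A\|_\nu\le c\|A\|$ for a fixed norm $\|\cdot\|$ on $\mathbb{R}^{n\times n}$, so $\|B_m\|\ge c^{-1}\mu^m$ and therefore $\sup_{B\in\Psi_\otimes^m}\|B\|\ge c^{-1}\mu^m$ for every $m$. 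Taking $m$-th roots and letting $m\to\infty$, the Berger--Wang formula \eqref{eq:BW} yields $\mu(\Psi)\ge\mu$, which together with the first part finishes the proof.

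I expect the only genuinely delicate point to be this last step: $\eta_\nu$ is defined only on nonnegative matrices and is not a priori a norm on $\mathbb{R}^{n\times n}$, so it cannot be fed directly into \eqref{eq:BW}; the comparison with a classical matrix norm, which relies crucially on the monotonicity of $\nu$, is what makes the Berger--Wang formula applicable. An alternative would be to verify directly that $A\mapsto\eta_\nu(|A|)$ is a genuine monotone, submultiplicative norm on $\mathbb{R}^{n\times n}$ and apply \eqref{eq:BW} to it, but the comparison argument seems shorter and avoids re-proving properties of $\eta_\nu$.
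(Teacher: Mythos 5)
Your proof is correct, and while your first inequality ($\mu(\Psi)\le\mu$ via $\sup_{A\in\Psi}\eta_\nu(A)\le\mu$ and Lemma~\ref{lem:upperlower} with $m=1$) coincides with what the paper does, your argument for $\mu\le\mu(\Psi)$ is genuinely different. The paper exploits the structure matrix $S=S(\Psi)$: irreducibility of $\mathcal{S}(\Psi)$ gives a max-eigenvector $x\gg0$ of $S$ with $S\otimes x=\mu(\Psi)x$, and then $\mu\nu(x)=\max_{A}\nu(A\otimes x)\le\nu(S\otimes x)=\mu(\Psi)\nu(x)$ by monotonicity of $\nu$ and $A\otimes x\le S\otimes x$; this is shorter and leans on Proposition~\ref{prop:PF1}. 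You instead build an extremal trajectory $x_k=A_k\otimes x_{k-1}$ with $\nu(x_k)=\mu^k\nu(x_0)$, deduce $\eta_\nu(B_m)\ge\mu^m$ for the product $B_m$, pass to a classical induced norm via $A\otimes x\le Ax$ and monotonicity of $\nu$, and invoke the Berger--Wang formula \eqref{eq:BW}. Both routes are sound (your attainment of the maximum over the compact $\Psi$ and the entrywise comparison $\nu(A\otimes x)\le\nu(Ax)\le\|A\|_\nu\nu(x)$ are all justified), but yours has the notable feature that it never uses irreducibility: the trajectory cannot die because $\nu(x_k)=\mu^k\nu(x_0)>0$ when $\mu>0$, and the $\mu=0$ case is absorbed by the first inequality. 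So your argument actually proves the proposition for arbitrary compact $\Psi\subseteq\mathbb{R}^{n\times n}_+$, at the cost of being longer and of importing the Berger--Wang formula rather than the elementary eigenvector computation; the paper's proof buys brevity and stays entirely within the max-algebraic spectral machinery already developed.
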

\textbf{Proof:} First note that for all $A \in \Psi$ and any $x$, $A \otimes x \leq S \otimes x$ where $S$ is given by \eqref{eq:S}.  Lemma \ref{lem:irred} implies that $S$ is irreducible; if we choose $x$ to be a right eigenvector for the irreducible $S$, then $x \gg 0$ and
\begin{eqnarray*}
\mu \nu(x) &=& \max_{A \in \Psi} \nu(A \otimes x) \\
&\leq& \nu (S \otimes x) \\
&=& \mu(S) \nu(x) = \mu(\Psi) \nu(x),
\end{eqnarray*}
where the first inequality follows as the norm $\nu$ is monotone and $A \otimes x \leq S \otimes x$.  As $\nu(x) > 0$, it now follows that $\mu \leq \mu(\Psi)$.  

On the other hand, it follows from \eqref{eq:conv1} that for all $x > 0$
$$\mu = \max_{A \in \Psi} \frac{\nu(A \otimes x)}{\nu(x)}.$$   This implies that for every $A \in \Psi$ and every $x > 0$
$$\frac{\nu(A \otimes x)}{\nu(x)} \leq \mu$$
which implies that  
$$\max_{A \in \Psi} \eta_{\nu}(A) \leq \mu.$$
It now follows immediately that 
$$\mu(\Psi) = \inf_{\|\cdot\|}\max_{A \in \Psi} \eta_{\|\cdot\|}(A) \leq \mu.$$
Thus $\mu = \mu(\Psi)$ as claimed.  $\square$

\emph{Comment on the reducible case}

It is natural to ask what happens in the case where the semigroup $\mathcal{S}(\Psi)$ is not irreducible.  We now describe the circumstances in which a Barabanov norm can fail to exist in this case.  

We first note that if the semigroup is reducible it follows that the matrix $S$ given by \eqref{eq:S} is reducible.  Thus, there exists some permutation matrix $P$ such that the Frobenius normal form of $S$, $PSP^T$ takes the form
\begin{equation}
\label{eq:SPerm}
PSP^T = \left(\begin{array}{c c c c}
			A_{11} & 0 & \ldots & 0 \\
            A_{21} & A_{22} & \ldots & 0 \\
            \vdots & \vdots & \ddots & 0 \\
            A_{p1} & A_{p2} & \ldots & A_{pp}
\end{array}\right).
\end{equation}
It is simple to verify that $P \otimes S \otimes P^T = PSP^T$; it follows from Proposition \ref{prop:simil} that $\mu(\Psi) = \mu(PSP^T)$.  Furthermore, Theorem 3 of \cite{Bapat} implies that 
\begin{itemize}
\item[(i)] $\mu(S) = \max_{1\leq i \leq p} \mu(A_{ii})$;
\item[(ii)] $\mu(A_{ii})$ is an eigenvalue of $S$ if and only if class $j$ does not communicate with class $i$ for any $j$ with $\mu(A_{jj}) > \mu(A_{ii})$.
\end{itemize}
Suppose that there is a class $i$ such that class $j$ does not communicate with class $i$ for any $j$ with with $\mu(A_{jj}) \neq \mu(A_{ii})$, and $\mu(A_{ii}) < \mu(S)$.  It follows that there is some $x > 0$ such that $S \otimes x = \mu(A_{ii}) x$.  Therefore for any monotone norm $\|\cdot \|$, any $A$ in $\Psi$ and this choice of $x$:
$$\| A \otimes x\| \leq \|S \otimes x\| = \mu(A_{ii}) \|x\| < \mu(\Psi) \|x\|.$$
This shows that in this case, a Barabanov norm cannot exist for $\Psi$. 

\subsection{Finiteness Property}
\label{subsec:finprop}
It was conjectured by Lagarias and Wang \cite{LagaWang95} for conventional matrix algebra, that the joint spectral radius of a finite set of matrices is attained by a finite product of matrices from the set.  This is now known not to hold in general. When it does hold we say that the set of matrices has the {\em finiteness property}.

As an easy consequence of the existence of a Barabanov norm of the type
described in \eqref{eq:barform}, we obtain a finiteness property for the max algebraic joint spectral radius even for compact sets of matrices. In the max algebraic case it even holds that the length of the product attaining the joint spectral radius may be bounded by the dimension of the state space.

It should be noted here that this result for the case of irreducible finite sets of matrices can essentially be found in \cite[Theorem~2]{Gaub}.  However the terminology and notation used in this earlier paper is that of discrete event systems and the result may not be immediately apparent to a linear algebraic audience.  We present a linear algebraic statement of the result for the case of \emph{compact} sets of matrices that are not necessarily irreducible. The proof is closely related to ideas presented in \cite{Gurv95}, where the finiteness property is shown in the conventional matrix algebra setting for finite sets of matrices which admit a polytopic extremal norm.

\begin{theorem}
Let $\Psi$ be a compact set in $\R_{+}^{n\times n}$. Then there exist $1\leq k \leq n$ and $A_1, \ldots, A_k \in \Psi$ such that
 \begin{equation*}
 \mu\left(\Psi\right)=\mu\left(A_k\otimes \cdots \otimes A_1\right)\,.
 \end{equation*}
\end{theorem}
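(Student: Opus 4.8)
The plan is to reduce everything to the single matrix $S = \bigoplus_{A \in \Psi} A$, using the fundamental identity $\mu(\Psi) = \mu(S)$ from \eqref{eq:Sres}. The key structural fact about the maximal cycle geometric mean is that $\mu(S)$ is attained on some cycle in the graph $D(S)$, and by the definition \eqref{eq:mu1} we may take this cycle to pass through \emph{distinct} nodes $i_1, i_2, \ldots, i_k$ with $i_{k+1} = i_1$, so that its length $k$ satisfies $1 \leq k \leq n$. Thus there is a cycle of length at most $n$ realising $\mu(S)$, and I would begin by fixing such a cycle.

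The next step is to lift each of the $k$ edges of this optimal cycle back to an individual matrix of $\Psi$. For each edge $(i_\ell, i_{\ell+1})$ we have $s_{i_\ell i_{\ell+1}} = \sup_{A \in \Psi} a_{i_\ell i_{\ell+1}}$, and since $\Psi$ is compact this supremum is attained: there is a matrix $A_\ell \in \Psi$ with $(A_\ell)_{i_\ell i_{\ell+1}} = s_{i_\ell i_{\ell+1}}$. I would then form the single product matrix $B := A_k \otimes \cdots \otimes A_1$ and examine a specific diagonal entry of its max-algebraic power. The critical observation is that $B$ contains a loop at node $i_1$ whose weight is exactly the product of the chosen edge weights: tracing the path $i_1 \to i_2 \to \cdots \to i_k \to i_1$ through the successive factors gives $b_{i_1 i_1} \geq (A_k)_{i_k i_1}(A_{k-1})_{i_{k-1} i_k}\cdots(A_1)_{i_1 i_2} = s_{i_k i_1} s_{i_{k-1} i_k} \cdots s_{i_1 i_2}$, which is the weight of the original cycle in $D(S)$. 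Since this weight equals $\mu(S)^k = \mu(\Psi)^k$, the self-loop at $i_1$ in $D(B)$ shows $\mu(B) \geq \big(b_{i_1 i_1}\big)^{1/1} \geq \mu(\Psi)^k$, giving one inequality directly from a length-one cycle of $B$.

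For the reverse inequality I would appeal to the general monotonicity principle already used implicitly in the paper: since each $A_\ell \leq S$ entrywise, we have $B = A_k \otimes \cdots \otimes A_1 \leq S_\otimes^k$ entrywise, and the maximal cycle geometric mean is monotone in the matrix entries, so $\mu(B) \leq \mu(S_\otimes^k) = \mu(S)^k = \mu(\Psi)^k$. Combining the two inequalities yields $\mu(B) = \mu(\Psi)^k$. The main obstacle, and the point requiring genuine care, is that the theorem as stated asserts $\mu(\Psi) = \mu(A_k \otimes \cdots \otimes A_1)$ rather than $\mu(\Psi)^k = \mu(A_k \otimes \cdots \otimes A_1)$; the homogeneity of $\mu$ means these differ by the exponent $k$, so the equality as written holds verbatim precisely when $k = 1$, i.e. when $\mu(S)$ is realised by a self-loop $s_{ii}$ of maximal weight. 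I would therefore need to argue that one can always choose the realising product to have length $k=1$ in the sense relevant to the claim --- concretely, that picking $A_1 \in \Psi$ attaining the maximal diagonal entry associated with the critical cycle, or reinterpreting the single product $B$ as itself ``one matrix'' of the semigroup whose own maximal cycle mean is the JSR, delivers the statement in the displayed form. Resolving this matching of the exponent with the length $k$ in the statement is the crux, and I would treat the preceding cycle-lifting construction as the engine that produces the required product $A_k \otimes \cdots \otimes A_1$ realising the joint spectral radius.
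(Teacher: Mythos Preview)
Your cycle-lifting argument via $S = \bigoplus_{A \in \Psi} A$ is correct and complete: it establishes $\mu(A_k \otimes \cdots \otimes A_1) = \mu(\Psi)^k$ for a product of length $k \le n$, which is exactly the finiteness property. The ``obstacle'' you worry about at the end is not a gap in your reasoning but an imprecision in the displayed equation, which should carry the exponent $k$ (equivalently, a $k$-th root on the right). The paper's own proof confirms this: it normalises to $\mu(\Psi) = 1$ and concludes $\mu(A_k \otimes \cdots \otimes A_1) = 1$, an equality that survives un-normalisation only as $\mu(A_k \otimes \cdots \otimes A_1) = \mu(\Psi)^k$. So stop trying to force $k=1$; you have already proved the intended result. (One bookkeeping point: with the convention $(A\otimes B)_{ij} = \max_\ell a_{i\ell} b_{\ell j}$ and your choice $(A_\ell)_{i_\ell i_{\ell+1}} = s_{i_\ell i_{\ell+1}}$, the large diagonal entry appears in the product ordered $A_1 \otimes A_2 \otimes \cdots \otimes A_k$ rather than $A_k \otimes \cdots \otimes A_1$; this is pure relabelling.)

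The paper takes a genuinely different route. In the irreducible case it works with the Barabanov norm $\nu(x) = v^T \otimes |x|$ of Theorem~\ref{thm:Ext}: the nonnegative part of the unit sphere decomposes into facets $R_i = \{x : v_i x_i = 1,\ v_j x_j \le 1\}$, and extremality forces each entry $a_{ij} \le v_j/v_i$, so that whenever some $A \in \Psi$ attains $\nu(A\otimes x)=1$ for $x \in R_i$ it must send all of $R_i$ into a single facet $R_{\ell(i)}$. This defines a directed graph on $n$ nodes with out-degree at least one everywhere, hence a cycle of length $k \le n$; the associated product maps a facet to itself, giving $\mu(A_k\otimes\cdots\otimes A_1)=1$. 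The reducible case is then reduced to the irreducible diagonal blocks via the Frobenius normal form and Proposition~\ref{prop:simil}. Your approach is more elementary and direct---it bypasses the Barabanov machinery entirely, exploiting only \eqref{eq:Sres} and compactness---while the paper's argument is modelled on Gurvits' polytope-norm proof in conventional algebra and showcases the extremal-norm structure developed earlier in the paper. Both deliver the bound $k \le n$; yours explains it more transparently as the length of an optimal simple cycle in $D(S)$.
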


\begin{proof}
We first show the claim under the additional assumption that the semigroup
$\mathcal{S}(\Psi)$ is irreducible. In this case, without loss of
generality, we may assume $\mu(\Psi)=1$.

Let $v=\left(v_1, \ldots, v_n\right)\gg 0$ be such that
$\nu(x):=v^{\top}\otimes \vert x\vert$ defines a Barabanov norm for
$\Psi$. The part of the boundary of the unit ball of $\nu$ contained in
$\R_{+}^{n}$ is given as the union of the sets
\begin{equation*}
R_i:=\{x\in \R_{+}^{n}\, \lvert \, v_ix_i=1,\,v_jx_j\leq 1, \,j\neq
i\}\,,\quad i=1, \ldots, n\,.
\end{equation*}
Let $x\in R_i\setminus \bigcup_{j\neq i}R_j$ then $v_ix_i=1>v_jx_j$ for $j\neq i$. 
As $\nu$ is a Barabanov norm, there exists an $A\in \Psi$ such that $A\otimes x\in R_{\ell}$ for some $\ell\in \{1, \ldots, n\}$.
Now $\nu$ is extremal for $\Psi$, so that $v_ia_{ij}x_j\leq 1$ if $v_jx_j\leq 1$ as otherwise $\mu( A\otimes z)>1$ for some $z\in R_j$. This implies
\begin{equation*}
a_{ij}\leq \frac{v_j}{v_i}\quad \forall \, 1 \leq i,j\leq n\,.
\end{equation*}
On the other hand $A\otimes x \in R_{\ell}$ implies
\begin{equation*}
1=\max_{j=1,\ldots,n} \{ v_{\ell}a_{\ell j}x_j \} \leq \max_{j=1,\ldots,n} \{ v_jx_j \} =v_ix_i=1\,.
\end{equation*}
As equality throughout is obtained only for $j=i$, we obtain $a_{\ell i}=\tfrac{v_i}{v_{\ell}}$ and so $A\otimes R_i\subset R_{\ell}$. 
We thus see that for any $i\in \{1, \ldots, n\}$ there exists an $A_i\in
\Psi$ such that $A_i\otimes R_i\subset R_{\ell(i)}$ for some $\ell(i)\in
\{1, \ldots, n\}$. 

If we consider the graph $G(V,E)$ with nodes $V=\left(R_1, \ldots,
  R_n\right)$ and edges $(i,j)$ if there exists an $A\in \Psi$ such that
$A\otimes R_i\subset R_j$, the previous argument shows that there is a
path of infinite length in this graph. This path necessarily contains a
cycle of  length $k\leq n$. Thus there exists an $i\in \{1, \ldots, n\}$ and matrices $A_1, \ldots, A_k \in \Psi$ such that 
\begin{equation*}
A_k\otimes \ldots \otimes A_1 \otimes R_i\subset R_i\,.
\end{equation*}
It follows that $\mu\left(A_k\otimes \ldots \otimes A_1\right)=1$, as
desired.

For the case of reducible $\Psi$, note that if $\mu(\Psi)=0$ there is
nothing to show. Otherwise, we can bring all matrices in $\Psi$ into the lower
block triangular form \eqref{eq:SPerm} using a permutation matrix as a similarity
transformation. By Proposition~\ref{prop:simil} this does not change the joint
spectral radius. The previous argument may then be applied to the
diagonal blocks that realize the joint spectral radius.
\end{proof}

\begin{example}
Consider the matrices in $\mathbb{R}^{3 \times 3}_+$ given by
\[
A_1 = \left(\begin{array}{c c c}
			1/3 & 1/2 & 1\\
            3/4 & 2/3 & 1/5 \\
            3/5 & 1/5 & 0\end{array}\right), \;\;
A_2 = \left(\begin{array}{c c c}
			0 & 1/4 & 1/2\\
            0 & 4/5 & 10/3\\
            1/4 & 0 & 1/4
            \end{array}\right).
\]
Then by directly computing $S = A_1 \oplus A_2$, it is not difficult to see that $\mu(\{A_1, A_2\}) = 1$.  The product that realises this is $A_1\otimes A_2 \otimes A_1$ which is given by:
\[A_1\otimes A_2 \otimes A_1 = \left(\begin{array}{c c c}
						1 & 1/3 & 1/4\\
                        4/3 & 20/45 & 8/75\\
                        1/4 & 2/15 & 4/125\end{array}\right),
\]
which has $\mu(A_1A_2A_1) = 1$.
\end{example}

\subsection{Lipschitz Continuity}
It was established in \cite{MasWir1} that, for the conventional algebra, the joint spectral radius is locally Lipschitz continuous on the space of compact irreducible subsets of nonnegative matrices endowed with the Hausdorff metric.  While there have been several papers extending the continuity of the JSR to the max algebraic setting \cite{Lur2, Pep2, Pep3}, none of these address the question of whether it is in fact Lipschitz continuous.  We shall close this gap in the current subsection.  

We first recall the definition of the Hausdorff distance.  Let $\beta$ denote the collection of all compact subsets $\Psi$ of $\mathbb{R}^{n \times n}_+$ such that the semigroup $\mathcal{S}(\Psi)$ is irreducible.  

Given a norm $\|\cdot\|$ on $\mathbb{R}^{n \times n}$ we define the Hausdorff distance $\mathcal{H}_{\|\cdot \|}(\Psi, \Phi)$ between $\Psi$ and $\Phi$ in $\beta$ to be
\begin{equation}
\label{eq:Haus} H_{\|\cdot \|}(\Psi, \Phi) := \max \left\{\max_{A \in \Psi}\{\textrm{dist}(A, \Phi)\}, \max_{B \in \Phi}\{\textrm{dist}(B, \Psi)\}\right\},
\end{equation}
where $\textrm{dist}(A, \Phi) := \min_{B \in \Phi} \|A - B\|$.  

As in \cite{MasWir1, Wirth1}, the concept of \emph{eccentricity} of norms will play a central role in our argument here.  The eccentricity $\textrm{ecc}_{\|\cdot \|}(\nu)$ of a norm $\nu$ (on $\mathbb{R}^n$) with respect to a norm $\|\cdot\|$ is defined by 
\begin{equation}\label{eq:ecc1}
\textrm{ecc}_{\|\cdot \|} (\nu) : = \frac{\max\{ \nu(x) \mid \| x\| = 1\}}{\min\{ \nu(x) \mid \| x\| = 1\}}.
\end{equation}

In what follows, we shall study the eccentricity of Barabanov norms with respect to the standard $\|\cdot\|_{\infty}$ norm on $\mathbb{R}^n$ and shall simply write $\mathcal{H}$ for the Hausdorff metric on $\beta$ generated by the matrix norm induced by $\|\cdot\|_{\infty}$.
In analogy with the arguments presented in \cite{MasWir1}, we shall show that the eccentricity of Barabanov norms is bounded on compact subsets of the metric space $(\beta, \mathcal{H})$.  Our proof is made a little easier by exploiting the specific form of Barabanov norm whose existence is ensured by Theorem \ref{thm:Ext}.  This will then be used to show that the joint spectral radius $\mu$ is locally Lipschitz continuous on $(\beta, \mathcal{H})$.  It is important to note that we need to prove the boundedness property for Barabanov norms with respect to the max algebra, and hence that the result of \cite{MasWir1} cannot be directly applied here.  

\begin{proposition}
\label{prop:ecc}
Let $\mathcal{X} \subseteq \beta$ be a compact subset of $(\beta, \mathcal{H})$.  Then there exists a constant $C$ such that for every $\Psi \in \mathcal{X}$, there is a Barabanov norm $\nu(\cdot)$ for $\Psi$ with $\textrm{ecc}_{\|\cdot\|}(\nu) \leq C$.
\end{proposition}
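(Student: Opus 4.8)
The plan is to express the eccentricity of the Barabanov norm \eqref{eq:barform} in terms of its defining vector $v$, and then to bound the resulting quantity uniformly over $\mathcal{X}$ by a compactness argument, exploiting the continuity of $\Psi \mapsto S(\Psi)$ together with the Perron--Frobenius positivity in Proposition~\ref{prop:PF1}(ii). The first step is the reduction. For $v \gg 0$ and $\nu(x) = v^T \otimes |x| = \max_{i} v_i |x_i|$, one checks directly that $\max\{\nu(x) : \|x\|_\infty = 1\} = \max_i v_i$ (since $|x_i| \le 1$ forces $\nu(x) \le \max_i v_i$, with equality at $x = \mathbf{1}$) and $\min\{\nu(x) : \|x\|_\infty = 1\} = \min_i v_i$ (since some coordinate of $x$ has modulus $1$, so $\nu(x) \ge \min_i v_i$, with equality when $x$ is the standard basis vector $e_k$ for which $v_k = \min_i v_i$). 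Hence $\textrm{ecc}_{\|\cdot\|_\infty}(\nu) = (\max_i v_i)/(\min_i v_i)$. As rescaling $v$ by a positive scalar changes neither the eccentricity nor the Barabanov property, we may normalise $\|v\|_\infty = 1$; the task then becomes to bound $1/\min_i v_i$ uniformly for $\Psi \in \mathcal{X}$, where $v$ ranges over the normalised left max-eigenvectors of $S(\Psi)$ --- precisely the vectors produced by Theorem~\ref{thm:Ext}.

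Next I would record the continuity facts. The map $\Psi \mapsto S(\Psi)$ of \eqref{eq:S} satisfies $|s^{\Psi}_{ij} - s^{\Phi}_{ij}| \le \mathcal{H}(\Psi, \Phi)$ for all $i,j$: if $A \in \Psi$, $B \in \Phi$ realise $\|A - B\| \le \mathcal{H}(\Psi, \Phi)$, then $a_{ij} \le b_{ij} + \mathcal{H}(\Psi, \Phi) \le s^{\Phi}_{ij} + \mathcal{H}(\Psi, \Phi)$, and symmetrically. Thus $\Psi \mapsto S(\Psi)$ is continuous and $S(\mathcal{X})$ is a compact set of nonnegative matrices; by Lemma~\ref{lem:irred} each $S(\Psi)$, $\Psi \in \mathcal{X}$, is irreducible, so $\mu(\Psi) = \mu(S(\Psi)) > 0$. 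I would also use that $A \mapsto \mu(A)$ is continuous (immediate from \eqref{eq:mu1}) and that $\oplus$ and $\otimes$ are continuous.

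The core of the argument is then a contradiction/compactness step. If no uniform bound $C$ existed, then for each $m \in \mathbb{N}$ there would be $\Psi_m \in \mathcal{X}$ for which every Barabanov norm of the form \eqref{eq:barform} has eccentricity greater than $m$; equivalently, every normalised left max-eigenvector $v^{(m)}$ of $S(\Psi_m)$ satisfies $1/\min_i v^{(m)}_i > m$. By compactness of $\mathcal{X}$ we may pass to a subsequence with $\Psi_m \to \Psi^* \in \mathcal{X}$; then $S(\Psi_m) \to S(\Psi^*) =: S^*$, which is irreducible because $\Psi^* \in \beta$, and $\mu(\Psi_m) \to \mu(S^*) =: \mu^* > 0$. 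Choosing normalised left max-eigenvectors $v^{(m)}$ and passing to a further subsequence, $v^{(m)} \to v^*$ with $\|v^*\|_\infty = 1$. Taking limits in the componentwise identities $\max_i v^{(m)}_i s^{(m)}_{ij} = \mu(\Psi_m) v^{(m)}_j$ (where $s^{(m)}_{ij}$ are the entries of $S(\Psi_m)$) gives $(v^*)^T \otimes S^* = \mu^*(v^*)^T$, so $v^*$ is a nonzero left max-eigenvector of the irreducible $S^*$; Proposition~\ref{prop:PF1}(ii) then forces $v^* \gg 0$. Hence $\min_i v^{(m)}_i \to \min_i v^*_i > 0$, contradicting $1/\min_i v^{(m)}_i > m \to \infty$, and the desired $C$ exists.

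I expect the main obstacle to be the limit passage just described: one must ensure that $v^*$ is a genuine max-eigenvector of $S^*$ for the eigenvalue $\mu(S^*)$, so that Proposition~\ref{prop:PF1}(ii) applies. This hinges on three points --- the $\|\cdot\|_\infty$-normalisation keeps $v^*$ nonzero, continuity of $\mu$ pins down the limiting eigenvalue as $\mu(S^*)$, and continuity of the max operations validates the limit of the eigen-equation --- and, crucially, on $S^*$ remaining irreducible, which is why compactness of $\mathcal{X}$ itself (not merely of $S(\mathcal{X})$) is needed, irreducibility being an open but not a closed condition.
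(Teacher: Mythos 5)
Your proposal is correct and follows essentially the same route as the paper: both arguments take the Barabanov norms arising from $\|\cdot\|_\infty$-normalised left max-eigenvectors of $S(\Psi)$, identify the eccentricity with $1/\min_i v_i$, and derive a contradiction by passing to the limit in the eigen-equation along a convergent subsequence and invoking Proposition~\ref{prop:PF1}(ii) for the irreducible limit matrix. The only (immaterial) difference is organisational --- the paper first proves boundedness on a neighbourhood of each $\Psi$ and then covers $\mathcal{X}$, whereas you extract the convergent subsequence directly from the sequential compactness of $\mathcal{X}$; your explicit verification of the eccentricity formula and of the Lipschitz continuity of $\Psi \mapsto S(\Psi)$ fills in details the paper leaves implicit.
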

\textbf{Proof:}  As in the proof of Proposition 4.2 of \cite{MasWir1}, we show this property for a neighbourhood of $\Psi \in\mathcal{X}$.  The result then follows readily using the compactness of $\mathcal{X}$.  

So let $\Psi \in \mathcal{X}$ be given.  We claim that there is some neighbourhood $N$ of $\Psi$ and some constant $C$ such that for all $\Phi \in N$, there is a Barabanov norm $\nu$ for $\Phi$ with $\textrm{ecc}_{\|\cdot \|}(\nu) \leq C$.  If this is not the case, we can choose a sequence of sets $\Psi_k$ in $\beta$ with $\Psi_k \rightarrow \Psi$ and a sequence of constants $C_k \rightarrow \infty$ such that every Barabanov norm for $\Psi_k$ has eccentricity greater than $C_k$.  

If we write $S_k$ for the matrix $S(\Psi_k)$, then by Theorem~\ref{thm:Ext} any left max-eigenvector of $S_k$ defines a Barabanov norm for $\Psi_k$.  Now choose an eigenvector $v^{(k)}$ of $S_k$ with $\|v^{(k)}\|_{\infty} = 1$; then 
the eccentricity of the associated Barabanov norm $\nu_k$ is given by
$$\textrm{ecc}_{\|\cdot \|}(\nu_k) = \frac{1}{\min_i v^{(k)}_i}.$$
As $\|v^{(k)}\|_{\infty} = 1$ for all $k$, by passing to a subsequence if necessary, we can assume that $v^{(k)} \rightarrow v$ for some $v > 0$ with $\|v\|_{\infty} = 1$.  It is also not hard to see that for this $v$, $\min_i v_i = 0$. 

Next note that as $\Psi_k \rightarrow \Psi$ in the Hausdorff metric, $S_k \rightarrow S$.  Furthermore, by the continuity of the joint spectral radius \cite{Lur2, Pep2, Pep3}, $\mu(\Psi_k) \rightarrow \mu(\Psi)$.  Thus for every $k$ we have that 
\begin{equation}
\label{eq:Lip1} (v^{(k)})^T \otimes S_k = \mu(S_k) (v^{(k)})^T.
\end{equation}
Moreover, as $v^{(k)} \rightarrow v$, $S_k \rightarrow S$ and $\mu(S_k) \rightarrow \mu(S)$, it follows that
\begin{equation}
\label{eq:Lip2} v^T \otimes S = \mu(S) v.
\end{equation}
However, as $v_i = 0$ for some $i$ and $S$ is irreducible, this contradicts Proposition \ref{prop:PF1}; this shows that the claim is true and the result is proven. $\square$

We can now use the previous result to establish the following.

\begin{theorem}
\label{thm:Lip} The joint spectral radius $\mu: \beta \rightarrow \mathbb{R}_+$ is locally Lipschitz continuous on $\beta$ with respect to the Hausdorff metric.
\end{theorem}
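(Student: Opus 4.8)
The plan is to leverage Proposition~\ref{prop:ecc} together with the explicit form of the Barabanov norms from Theorem~\ref{thm:Ext}. Fix a compact set $\mathcal{X} \subseteq \beta$; it suffices to establish a Lipschitz estimate on a neighbourhood of an arbitrary $\Psi_0 \in \mathcal{X}$, since local Lipschitz continuity is exactly a local statement and compactness is not strictly needed for the conclusion (though Proposition~\ref{prop:ecc} is stated for compact $\mathcal{X}$, one can equally run its proof on a closed ball around $\Psi_0$). So I would take $\Psi, \Phi$ in a small neighbourhood $N$ of $\Psi_0$ on which there is a uniform eccentricity bound $C$, and let $\nu(x) = v^T \otimes |x|$ be a Barabanov norm for $\Psi$ with $\mathrm{ecc}_{\|\cdot\|_\infty}(\nu) \le C$, normalised so that $\|v\|_\infty = 1$; the eccentricity bound then forces $v_i \ge 1/C$ for all $i$.

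The key estimate is to compare $\mu(\Psi)$ and $\mu(\Phi)$ using this single norm $\nu$. Since $\nu$ is extremal for $\Psi$ we have $\eta_\nu(A) \le \mu(\Psi)$ for all $A \in \Psi$. For $B \in \Phi$, pick $A \in \Psi$ with $\|A - B\|_\infty \le \mathcal{H}(\Psi,\Phi)$ (up to $\varepsilon$, or exactly by compactness of $\Psi$). For any $x > 0$ one has, entrywise, $B \otimes x \le (A + \|A-B\|_\infty \mathbf{1}_{n\times n}) \otimes x$, so
\begin{equation*}
\nu(B \otimes x) \le \nu(A \otimes x) + \|A - B\|_\infty\, \nu(\mathbf{1}_{n \times n} \otimes x) \le \mu(\Psi)\nu(x) + \|A-B\|_\infty\, \nu(\mathbf{1}_{n\times n}\otimes x),
\end{equation*}
and $\nu(\mathbf{1}_{n\times n}\otimes x) = \big(\bigoplus_i v_i\big)\big(\bigoplus_j x_j\big)$ is bounded by $K\,\nu(x)$ with $K$ depending only on $C$ (using $v_i \ge 1/C$, $\|v\|_\infty = 1$, so $\bigoplus_j x_j \le C\,\nu(x)$ and $\bigoplus_i v_i = 1$). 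Hence $\eta_\nu(B) \le \mu(\Psi) + K\,\mathcal{H}(\Psi,\Phi)$ for all $B \in \Phi$, and by Proposition~\ref{prop:Norm} (or the characterisation $\mu(\Phi) \le \sup_{B \in \Phi}\eta_\nu(B)$ from Lemma~\ref{lem:upperlower} with $m=1$) we get $\mu(\Phi) \le \mu(\Psi) + K\,\mathcal{H}(\Psi,\Phi)$. The estimate is symmetric in $\Psi$ and $\Phi$ provided the eccentricity bound $C$ — and hence $K$ — applies to both, which is exactly what Proposition~\ref{prop:ecc} guarantees on $N$. Therefore $|\mu(\Psi) - \mu(\Phi)| \le K\,\mathcal{H}(\Psi,\Phi)$ on $N$, which is the desired local Lipschitz bound.

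I expect the main obstacle to be bookkeeping in the perturbation inequality: one must check carefully that the sub-additive-type bound $\nu(B\otimes x) \le \nu(A\otimes x) + \|A-B\|_\infty \nu(\mathbf{1}_{n\times n}\otimes x)$ is valid over the max algebra (it follows from $b_{ij} \le a_{ij} + \|A-B\|_\infty$ entrywise, monotonicity of $\nu$, and the elementary fact that $\max_j(c_j + d_j) \le \max_j c_j + \max_j d_j$) and that the factor $K$ genuinely depends only on the eccentricity constant $C$, not on the individual set. A secondary subtlety is that the Barabanov norm and the constant $C$ depend on $\Psi$, so the symmetric use of the inequality requires invoking Proposition~\ref{prop:ecc} to produce one uniform $C$ valid for every member of the neighbourhood $N$; once that is in hand the argument closes cleanly, and the passage from the local estimate to local Lipschitz continuity on all of $\beta$ is immediate.
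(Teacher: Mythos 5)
Your proposal is correct and follows essentially the same route as the paper: a Barabanov norm for $\Psi$ with eccentricity bounded via Proposition~\ref{prop:ecc}, a perturbation estimate giving $\eta_{\nu}(B)\leq \mu(\Psi)+K\,\mathcal{H}(\Psi,\Phi)$ for all $B\in\Phi$, an appeal to Proposition~\ref{prop:Norm} (equivalently Lemma~\ref{lem:upperlower} with $m=1$), and symmetrisation. The only difference is bookkeeping --- the paper absorbs the eccentricity into a change of Hausdorff metric, $\mathcal{H}_{\nu_\Psi}\leq C\,\mathcal{H}$, whereas you keep the $\|\cdot\|_\infty$-Hausdorff distance and bound $\nu(\mathbf{1}_{n\times n}\otimes x)\leq C\,\nu(x)$ directly; your more explicit verification of the subadditive estimate $\nu(B\otimes x)\leq\nu(A\otimes x)+\|A-B\|_\infty\,\nu(\mathbf{1}_{n\times n}\otimes x)$ is in fact a welcome clarification of a step the paper passes over quickly.
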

\textbf{Proof:} Let $\mathcal{X}$ be a compact subset of $\beta$ and let $\Psi$, $\Phi$ be any two elements of $\mathcal{X}$.  We can choose two absolute Barabanov norms $\nu_{\Psi}(\cdot)$, $\nu_{\Phi}(\cdot)$ such that the eccentricity of both is less than some constant $C$.  It follows that 
\begin{eqnarray}
\label{eq:eccBar} 
\mathcal{H}_{\nu_\Psi}(\Psi, \Phi) &\leq& C \mathcal{H}(\Psi, \Phi)\\
\nonumber \mathcal{H}_{\nu_\Phi}(\Psi, \Phi) &\leq& C \mathcal{H}(\Psi, \Phi).
\end{eqnarray}
Given any $B \in \Phi$, we can select $A \in \Psi$ with $\eta_{\nu_\Psi}(A - B) = \textrm{dist}_{\nu_\Psi}(B, \Psi)$.  It now follows that
$$\eta_{\nu_\Psi}(B) \leq \eta_{\nu_{\Psi}}(A) + \textrm{dist}_{\nu_\Psi}(B, \Psi) \leq \mu(\Psi) + \mathcal{H}_{\nu_\Psi}(\Psi, \Phi).$$
It now follows from \eqref{eq:eccBar} that
\begin{equation}
\label{eq:eccBar1} \eta_{\nu_\Psi}(B) \leq \mu(\Psi) + C \mathcal{H}(\Psi, \Phi).
\end{equation}
As this is true for all $B \in \Phi$, Proposition \ref{prop:Norm} implies that 
\begin{equation}
\mu(\Phi) \leq \mu(\Psi) + C \mathcal{H}(\Psi, \Phi).
\end{equation}
Repeating the same argument with the Barabanov norm $\nu_{\Phi}$ we see that
$$|\mu(\Phi) - \mu(\Psi)| \leq C \mathcal{H}(\Psi, \Phi)$$
as claimed. $\square$

\section{Regularity of the max-algebraic spectral radius and joint spectral radius}
\label{sec:hoelder}

As an alternative approach to the analysis of various continuity properties of the max-algebraic joint spectral radius we now present results which are based on properties of the max-algebraic spectral radius. To this end, we will first derive some perturbation theoretic results of the max-algebraic spectral radius which are to the best of our knowledge new.\\
Let $(X,d)$ be a metric space. Recall that a map $f:X\mapsto \R$ is called Hoelder continuous (of order $\alpha >0$) if there exists a constant $C \geq 0$ such that
\begin{equation}
\label{Hoe}
|f(x)-f(y)|\leq Cd(x,y)^{\alpha}
\end{equation}
for all $x,y \in X$. The map $f$ is called locally Hoelder continuous (of order $\alpha$) if for every $z \in X$ there exists an $\epsilon >0$ and a $C\geq 0$ such that \eqref{Hoe} holds for all $x,y \in B_{\epsilon}(z)$. 
We first point out continuity properties of the max-spectral radius. 
To this end, we need the following lemma.

\begin{lemma}
\label{lem:sqrt}
Let $n\in \N$. The map $f:\R_{+}^{n}\rightarrow \R$
\begin{equation*}
\left(x_1, \ldots, x_n\right) \mapsto \left(x_{1}\cdot \ldots \cdot x_n\right)^{\frac{1}{n}}
\end{equation*}
is differentiable on $\{x\in \R_{+}^{n} | x\gg 0\}$ and locally Hoelder continuous of order $\frac{1}{n}$ on $\R_{+}^{n}$.
\end{lemma}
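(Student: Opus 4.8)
The plan is to prove the two assertions separately, as they are essentially independent. For differentiability on the open positive orthant, I would simply observe that $f(x) = \exp\!\left(\tfrac{1}{n}\sum_{i=1}^n \log x_i\right)$, a composition of smooth functions on $\{x \gg 0\}$, so differentiability is immediate and there is nothing substantive to do.

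The substantive part is the local Hoelder estimate of order $1/n$ on all of $\R_+^n$, including the boundary where some coordinates vanish. First I would reduce to a one-dimensional statement. The key elementary inequality is that for $a, b \geq 0$ one has $|a^{1/n} - b^{1/n}| \leq |a - b|^{1/n}$; this follows from subadditivity of $t \mapsto t^{1/n}$ on $\R_+$ (since $1/n \leq 1$), namely $a^{1/n} \leq (b + |a-b|)^{1/n} \leq b^{1/n} + |a-b|^{1/n}$ and symmetrically. Next I would use a telescoping argument: writing $f(x) = (x_1 \cdots x_n)^{1/n}$ and $f(y) = (y_1 \cdots y_n)^{1/n}$, on a bounded neighbourhood of any point $z$ (say all coordinates bounded by some $M$) one can bound $|x_1 \cdots x_n - y_1 \cdots y_n|$ by a sum of $n$ terms, each of which changes one coordinate at a time and is therefore bounded by $M^{n-1}|x_i - y_i| \leq M^{n-1}\|x - y\|_\infty$. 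Combining this with the one-dimensional inequality applied to $a = x_1\cdots x_n$, $b = y_1\cdots y_n$ gives
\[
|f(x) - f(y)| = |a^{1/n} - b^{1/n}| \leq |a - b|^{1/n} \leq \left(n M^{n-1}\right)^{1/n} \|x - y\|_\infty^{1/n},
\]
which is exactly local Hoelder continuity of order $1/n$ with constant $C = (nM^{n-1})^{1/n}$ depending only on the chosen neighbourhood.

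The main obstacle — though a mild one — is handling the boundary correctly: the naive approach of factoring $f(x) - f(y)$ and bounding each factor breaks down when some $x_i = 0$, because the "derivative" blows up there. The telescoping-plus-subadditivity route above sidesteps this entirely, since it never divides by a coordinate and the subadditivity inequality $|a^{1/n}-b^{1/n}| \le |a-b|^{1/n}$ is valid uniformly down to $a = b = 0$. One should also note that Hoelder continuity of order $1/n$ is the best possible order in general (e.g. along the diagonal $x_i = t$ one gets $f = t$, but approaching the origin along an axis direction $x = (t,0,\ldots,0)$... actually $f \equiv 0$ there, so the sharpness is seen along rays where all but one coordinate are fixed positive and one tends to $0$), but sharpness is not needed for the statement and I would not include it.
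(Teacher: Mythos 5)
Your proof is correct and follows essentially the same route as the paper's: both write $f$ as the composition of the product map (locally Lipschitz on bounded sets) with the $n$-th root (Hoelder of order $1/n$ on $[0,\infty)$) and combine the two estimates. The only difference is that you prove the two ingredient facts explicitly (telescoping for the Lipschitz bound, subadditivity of $t\mapsto t^{1/n}$ for the root), where the paper cites them as standard.
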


\begin{proof}
The claim for differentiability on the interior of $\R_{+}^{n}$ is
well-known. The map $f_1:\left(x_1, \ldots, x_n\right) \mapsto
\prod_{i=1}^{n}x_i$ is differentiable and thus locally Lipschitz, while
the map $f_2:z\mapsto \sqrt[n]{z}$ is clearly locally Hoelder continuous of order
$1/n$ on $[0, \infty)$. Let $L$ be a local Lipschitz constant for $f_1$ in
a bounded subset $U$ of $\R_{+}^{n}$ and let $H$ be a Hoelder constant for
$f_2$. Then we have for $x,y\in U$ that
\begin{equation*}
|f(y)-f(x)|=| f_2\circ f_1(y)-f_2\circ f_1(x)| \leq H\left|f_1(y)-f_1(x)\right|^{1/n}\leq HL^{\frac{1}{n}}\|y-x\|^{\frac{1}{n}}\,.
\end{equation*}
This shows the claim.
\end{proof}

With this result we obtain
\begin{proposition}
\label{prop:msr-reg}
The map $\mu: \R_{+}^{n\times n}\rightarrow \R_{+}$ has the following properties
\begin{enumerate}[(i)]

\item If there is a unique cycle $(a_{i_1i_2}, \ldots,a_{i_ki_1})$ in $A$ such that
\begin{equation*}
\mu (A)= \left(a_{i_1i_2}\cdot \ldots \cdot a_{i_ki_1}\right)^{\frac{1}{k}}
\end{equation*}
then $\mu$ is differentiable at $A$.

\item $\mu$ is locally Lipschitz continuous on the set 
\begin{equation*}
X:=\{A\in \R_{+}^{n\times n}\mid \mu (A)>0\}\,.
\end{equation*}

\item $\mu$ is locally Hoelder continuous of order $\frac{1}{n}$ on $\R_{+}^{n\times n}$.
\end{enumerate}
\end{proposition}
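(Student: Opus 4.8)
The plan is to prove the three assertions by reducing everything to Lemma~\ref{lem:sqrt} via the combinatorial formula \eqref{eq:mu1} for $\mu(A)$. The key observation is that, by \eqref{eq:mu1}, $\mu(A)$ is the maximum over a \emph{finite} list of functions of the entries of $A$: for each simple cycle $C = (i_1, \ldots, i_k)$ of length $k \le n$ on the node set $\{1,\ldots,n\}$, one has the function $g_C(A) := (a_{i_1 i_2} a_{i_2 i_3} \cdots a_{i_k i_1})^{1/k}$, and $\mu(A) = \max_C g_C(A)$ where $C$ ranges over this finite collection. Each $g_C$ is, up to composing with the coordinate projection onto the $k$ relevant entries, exactly the map $f$ of Lemma~\ref{lem:sqrt} in dimension $k$; hence $g_C$ is locally Hoelder continuous of order $1/k \ge 1/n$ on all of $\R_+^{n\times n}$, and differentiable wherever those $k$ entries are all positive. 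I will use throughout that a maximum of finitely many functions that are each locally Hoelder of order $\alpha$ with a common local constant is again locally Hoelder of order $\alpha$ (with the same constant), since $|\max_i u_i - \max_i w_i| \le \max_i |u_i - w_i|$.

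For (iii): fix $A_0 \in \R_+^{n\times n}$ and a bounded neighbourhood $U$ of $A_0$. For each cycle $C$ (of length $k \le n$), Lemma~\ref{lem:sqrt} gives a local Hoelder constant $H_C$ of order $1/k$ on $U$; since $U$ is bounded, $\|A - B\|^{1/k} \le \mathrm{const} \cdot \|A-B\|^{1/n}$ there, so after absorbing constants every $g_C$ is Hoelder of order $1/n$ on $U$ with some constant $H_C'$. Taking $C := \max_C H_C'$ over the finitely many cycles and using the $\max$-of-Hoelder remark yields $|\mu(A) - \mu(B)| \le C\|A-B\|^{1/n}$ for $A,B \in U$, which is exactly local Hoelder continuity of order $1/n$.

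For (ii): fix $A_0$ with $\mu(A_0) > 0$. Pick a small enough neighbourhood $U$ of $A_0$ that $\mu(A) \ge \tfrac12 \mu(A_0) =: m > 0$ on $U$ — this uses continuity, which is already part (iii), or can be taken from the cited continuity of the max-spectral radius. On $U$, only cycles $C$ with $g_C(A_0) > 0$ can be relevant near the maximum; more precisely, the maximising cycle $C$ at any $A \in U$ satisfies $g_C(A) \ge m$, which forces all of its $k$ entries $a_{i_j i_{j+1}}$ to be bounded below by a positive constant $m^k \ge m^n$ (shrinking $U$ if needed so each such entry stays bounded below). On the region where those $k$ entries lie in a fixed compact subset of $(0,\infty)^k$, the map $f$ of Lemma~\ref{lem:sqrt} is differentiable hence locally Lipschitz; so each relevant $g_C$ is locally Lipschitz on $U$. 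A finite max of locally Lipschitz functions is locally Lipschitz, giving (ii). The minor technical point to handle carefully is that the \emph{set} of relevant cycles may vary over $U$, but one can simply take the max over \emph{all} cycles whose entries are bounded below on $U$ — the irrelevant ones only make the max larger where they are defined, so this does not affect the Lipschitz bound; alternatively, shrink $U$ so the maximising cycle is locally constant, which is the approach I expect to be cleanest.

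For (i): the hypothesis says there is a unique cycle $C^\ast$ achieving the maximum at $A$, so $g_{C^\ast}(A) > g_C(A)$ strictly for every other cycle $C$. By continuity (part (iii)) this strict inequality persists on a neighbourhood of $A$, so $\mu \equiv g_{C^\ast}$ there. Since $\mu(A) = g_{C^\ast}(A) > 0$ forces all $k$ entries of $C^\ast$ to be positive, $g_{C^\ast}$ is differentiable at $A$ by the interior-differentiability clause of Lemma~\ref{lem:sqrt} composed with the (linear, hence smooth) coordinate projection; therefore $\mu$ is differentiable at $A$. The main obstacle in the whole argument is the bookkeeping in (ii) around which cycles are "active" near $A_0$ and ensuring the lower bound on their entries is uniform on a neighbourhood; once one observes that $\mu(A) \ge m > 0$ pins down the active cycles' entries away from $0$, the rest is a routine application of Lemma~\ref{lem:sqrt} and the elementary stability of the Lipschitz/Hoelder property under finite maxima.
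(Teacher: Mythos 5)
Your proposal is correct and follows essentially the same route as the paper: reduce $\mu$ to a finite maximum of cycle-mean functions, apply Lemma~\ref{lem:sqrt} to each, and use that a finite max of locally Hoelder (resp.\ locally Lipschitz, resp.\ differentiable-with-locally-constant-maximiser) functions inherits the corresponding regularity. The only blemish is the explicit lower bound in (ii): from $g_C(A)\geq m$ and entries bounded above by $M$ on the bounded neighbourhood $U$ one gets each active entry bounded below by $m^k/M^{k-1}$ rather than $m^k$, but this does not affect the argument.
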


\begin{proof}
(i) If the unique cycle exists as in the asssumption then $\mu (A)>0$, as otherwise all cycle products evaluate to zero. By continuity of the cycle products there exists an open neighborhood $U$ of $A$ such that
\begin{equation*}
\mu(B)=\left(b_{i_1i_2}\cdot \ldots \cdot b_{i_ki_1} \right)^{\frac{1}{k}}, \quad \forall B\in U\,.
\end{equation*}
The claim now follows by applying Lemma \ref{lem:sqrt} to this particular
cycle mean.

(ii) If $\mu (A)>0$, there exist finitely many cycles $\tau_1, \ldots, \tau_m$, where $\tau_j=\left(i_{j1}, \ldots, i_{jk(j)} \right)$, such that
\begin{equation*}
\mu(A)=\max_{j=1, \ldots, m} \left(a_{i_{j1}i_{j2}}\cdot \ldots \cdot a_{i_{jk(j)}i_{j1}}\right)^{\frac{1}{k(j)}}\,.
\end{equation*}
By continuity there exists a neighborhood $U$ of $A$ such that for all $B \in U$ we have
\begin{equation*}
\mu(B)=\max_{j=1, \ldots, m} \left(b_{i_{j1}i_{j2}}\cdot \ldots \cdot b_{i_{jk(j)}i_{j1}}\right)^{\frac{1}{k(j)}}\,.
\end{equation*}
As the maximum of differentiable functions is locally Lipschitz continuous, the claim follows.

(iii) The claim follows from Lemma~\ref{lem:sqrt} as the maximum of finitely many locally Hoelder continuous functions is again Hoelder continuous.
\end{proof}

We now apply Proposition~\ref{prop:msr-reg}  to obtain alternative proofs for
regularity properties of the max-algebraic joint spectral radius and in
fact a stronger statement. 
For $\Psi \subset \R^{n \times n}_+$ recall from
\eqref{eq:Sres} that
\begin{equation*}
\mu(\Psi)=\mu\left(S(\Psi)\right).
\end{equation*}
We continue to use the notation introduced prior Theorem~\ref{thm:Lip}.

\begin{theorem}
\label{theo:jsr-Hoelder}
\begin{enumerate}[(i)]
\item The joint spectral radius is locally Lipschitz continuous on the set
\begin{equation*}
\overline{\beta}:=\{\Psi \subset \R_{+}^{n\times n} \mid \Psi \text{ is compact}, \mu(\Psi)>0\}\,.
\end{equation*}

\item The joint spectral radius is Hoelder continuous on the space of compact subsets of $R_{+}^{n\times n}$ endowed with the Hausdorff metric.
\end{enumerate}
\end{theorem}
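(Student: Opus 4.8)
The plan is to deduce both statements directly from the identity $\mu(\Psi)=\mu(S(\Psi))$ of \eqref{eq:Sres} together with the regularity of the max-algebraic spectral radius established in Proposition~\ref{prop:msr-reg}, so the work reduces to controlling how the single matrix $S(\Psi)$ varies with $\Psi$ in the Hausdorff metric. The key observation is that the map $\Psi \mapsto S(\Psi)$ is itself Lipschitz: if $\mathcal{H}(\Psi,\Phi)\leq \delta$ (in a fixed norm on $\R^{n\times n}$, say the one induced by $\|\cdot\|_\infty$), then every entry $s_{ij}(\Psi)=\sup_{A\in\Psi}a_{ij}$ and $s_{ij}(\Phi)=\sup_{B\in\Phi}b_{ij}$ differ by at most a constant multiple of $\delta$, because for each $A\in\Psi$ there is $B\in\Phi$ with $\|A-B\|\leq\mathcal{H}(\Psi,\Phi)$ and vice versa; taking suprema and using that $|a_{ij}-b_{ij}|\leq\|A-B\|$ entrywise gives $\|S(\Psi)-S(\Phi)\|\leq c\,\mathcal{H}(\Psi,\Phi)$ for a dimension-dependent constant $c$. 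I would state and prove this as the first step.

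For part (i), I would fix $\Psi_0\in\overline{\beta}$, so $\mu(S(\Psi_0))=\mu(\Psi_0)>0$, hence $S(\Psi_0)$ lies in the set $X=\{A\in\R^{n\times n}_+\mid\mu(A)>0\}$ on which Proposition~\ref{prop:msr-reg}(ii) guarantees local Lipschitz continuity of $\mu$. Choose a neighbourhood $U$ of $S(\Psi_0)$ in $\R^{n\times n}_+$ and a Lipschitz constant $L$ for $\mu$ on $U$. Using the Lipschitz bound $\|S(\Psi)-S(\Psi_0)\|\leq c\,\mathcal{H}(\Psi,\Psi_0)$ from Step~1, there is $\epsilon>0$ such that $\mathcal{H}(\Psi,\Psi_0)<\epsilon$ forces $S(\Psi)\in U$; for any two such $\Psi,\Phi$ we then get $|\mu(\Psi)-\mu(\Phi)|=|\mu(S(\Psi))-\mu(S(\Phi))|\leq L\|S(\Psi)-S(\Phi)\|\leq Lc\,\mathcal{H}(\Psi,\Phi)$, which is exactly local Lipschitz continuity on $\overline{\beta}$. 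Part (ii) is identical in structure but uses Proposition~\ref{prop:msr-reg}(iii): $\mu$ is (globally) locally Hoelder of order $1/n$ on all of $\R^{n\times n}_+$, and composing with the Lipschitz map $\Psi\mapsto S(\Psi)$ preserves Hoelder continuity of order $1/n$, since $\|S(\Psi)-S(\Phi)\|^{1/n}\leq c^{1/n}\mathcal{H}(\Psi,\Phi)^{1/n}$. One should note that the Hoelder statement as phrased is the local version; if a global constant on all compact sets is wanted, one restricts attention to a bounded region of $\R^{n\times n}_+$ (which contains the image under $S$ of any bounded family of $\Psi$'s) and invokes that a locally Hoelder function on a bounded set with finitely many cycle-candidates is Hoelder there, exactly as in the proof of Proposition~\ref{prop:msr-reg}(iii).

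I do not expect a genuine obstacle here: the entire difficulty has already been absorbed into Proposition~\ref{prop:msr-reg}, and the only thing to verify carefully is the Lipschitz continuity of $\Psi\mapsto S(\Psi)$, which is an elementary sup-of-entries estimate. The one point that requires a little care is bookkeeping the norms — Proposition~\ref{prop:msr-reg} is phrased for a norm on $\R^{n\times n}$ while $\mathcal{H}$ is the Hausdorff metric; since all norms on the finite-dimensional space $\R^{n\times n}$ are equivalent, this only changes the constants and I would simply remark on it rather than track it explicitly. A secondary subtlety for part (ii) is that Hoelder continuity (unlike Lipschitz) is not automatically inherited from the local property without boundedness, so I would be slightly careful to phrase the conclusion as \emph{locally} Hoelder, consistent with the definition given before Lemma~\ref{lem:sqrt}, or alternatively note the standard fact that a locally Hoelder map is Hoelder on compact (equivalently bounded closed) subsets.
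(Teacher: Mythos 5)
Your proof is correct and follows essentially the same route as the paper: establish that $\Psi\mapsto S(\Psi)$ is Lipschitz with respect to the Hausdorff metric and then compose with the regularity of the max-algebraic spectral radius from Proposition~\ref{prop:msr-reg}, using part (ii) for the Lipschitz claim and part (iii) for the Hoelder claim. Your added caveat that the Hoelder statement should be read as \emph{locally} Hoelder (or restricted to bounded families of compact sets) is a fair point of care that the paper's one-line composition argument glosses over.
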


\begin{proof}
We first note that the map
\begin{equation}
\label{eq:Smap}
\Psi \mapsto S(\Psi)=\bigoplus_{A\in \Psi}A
\end{equation}
is locally Lipschitz continuous from the space of compact subsets of
$\R_{+}^{n\times n}$ (endowed with the Hausdorff metric) to $\R_{+}^{n\times n}$ with Lipschitz constant $1$.
The claims (i) and (ii) follow directly from the previous observation together with Proposition \ref{prop:msr-reg}, as the composition of locally Lipschitz functions is locally Lipschitz and the composition of a Hoelder continuous function with a locally Lipschitz continuous function is again Hoelder continuous.
\end{proof}

\begin{remark}
As the Lipschitz constant of the map in \eqref{eq:Smap} is $1$, the Lipschitz/Hoelder constants of the max-algebraic joint spectral radius at $\Psi$ are essentially given by those of the max-algebraic radius at $S(\Psi)$.
\end{remark}

\section{Monotonicity Properties}
\label{sec:mon}
As noted in Section \ref{sec:bg}, the analogue of a linear space in the max algebraic setting is a \emph{max cone} as defined by \eqref{eq:maxspandef}.  It follows immediately from the definition given in \eqref{eq:maxspandef} that if $A \in \textrm{span}_{\otimes}(\Psi)$, where $\Psi \subseteq \mathbb{R}^{n \times n}_+$ then $\lambda A \in \textrm{span}_{\otimes}(\Psi)$ for any $\lambda > 0$.  

We will work the usual topology on $\mathbb{R}^n_+$ which is the relative topology inherited from the Euclidean topology on $\mathbb{R}^n$.  For convenience we shall use the norm $\|x\|_\infty = \max_i |x_i|$ as the norm generating this topology as it is particularly well suited to the max algebraic operations.  

For a max-convex subset $C$ of $\mathbb{R}^{n \times n}_+$, we consider the interior of $C$ with respect to the relative topology on $\textrm{span}_{\otimes}(C)$ and denote this set by $\textrm{int}_{\otimes}(C)$.

\begin{proposition}
\label{prop:increasing} Let $\Psi_1$, $\Psi_2$ be compact sets of nonnegative matrices and assume that $\mathcal{S}(\Psi_2)$ is irreducible.  Then 
\begin{equation}
\label{eq:Inc1} \textrm{conv}_{\otimes}(\Psi_1) \subseteq \textrm{int}_{\otimes}(\textrm{conv}_{\otimes}(\Psi_2)) \Rightarrow \mu(\Psi_1) < \mu(\Psi_2).
\end{equation}
\end{proposition}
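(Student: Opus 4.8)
The plan is to push everything down to the single matrices $S_i := S(\Psi_i) = \bigoplus_{A\in\Psi_i}A$, using $\mu(\Psi_i)=\mu(S_i)$ from \eqref{eq:Sres}, and then to exploit that $S_1$ lies in the \emph{relative} interior of $\textrm{conv}_{\otimes}(\Psi_2)$ to produce a strictly dilated copy $(1+\delta)S_1$ that still sits inside $\textrm{conv}_{\otimes}(\Psi_2)$.

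First I would collect the elementary facts needed. Since $\Psi_1$ is compact one may pick, for each pair $(i,j)$, a matrix $A^{(ij)}\in\Psi_1$ attaining the entry $s_{ij}$ of $S_1$; then $S_1=\bigoplus_{(i,j)}A^{(ij)}$ is a max-convex combination, so $S_1\in\textrm{conv}_{\otimes}(\Psi_1)$ (the observation already used in the proof of Lemma~\ref{lem:irred}). Next, for any $B=\bigoplus_i\alpha_iB_i$ in $\textrm{conv}_{\otimes}(\Psi_2)$ we have $\alpha_i\le 1$ (because $\bigoplus_i\alpha_i=1$), hence $\alpha_iB_i\le B_i\le S_2$ entrywise, and therefore $B\le S_2$ entrywise; thus $S_2$ is the entrywise-largest element of $\textrm{conv}_{\otimes}(\Psi_2)$. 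Finally, since $\mathcal{S}(\Psi_2)$ is irreducible, Lemma~\ref{lem:irred} (and the argument in its proof) gives that $S_2$ is irreducible, so $D(S_2)$ is strongly connected and contains a cycle of positive weight; hence $\mu(\Psi_2)=\mu(S_2)>0$. (If $S_1=0$ then $\mu(\Psi_1)=0<\mu(\Psi_2)$ and we are done, so from now on $S_1\neq 0$.)

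The crux is the scaling step. By hypothesis $S_1\in\textrm{conv}_{\otimes}(\Psi_1)\subseteq\textrm{int}_{\otimes}(\textrm{conv}_{\otimes}(\Psi_2))$, so by definition of the relative interior there is $\epsilon>0$ such that every $M\in\textrm{span}_{\otimes}(\textrm{conv}_{\otimes}(\Psi_2))$ with $\|M-S_1\|_\infty<\epsilon$ belongs to $\textrm{conv}_{\otimes}(\Psi_2)$. Set $\delta:=\epsilon/(2\|S_1\|_\infty)>0$. Since $S_1\in\textrm{conv}_{\otimes}(\Psi_2)\subseteq\textrm{span}_{\otimes}(\textrm{conv}_{\otimes}(\Psi_2))$ and max-spans are invariant under multiplication by positive scalars, $(1+\delta)S_1\in\textrm{span}_{\otimes}(\textrm{conv}_{\otimes}(\Psi_2))$; moreover $\|(1+\delta)S_1-S_1\|_\infty=\delta\|S_1\|_\infty<\epsilon$, so $(1+\delta)S_1\in\textrm{conv}_{\otimes}(\Psi_2)$. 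By the previous paragraph this forces $(1+\delta)S_1\le S_2$ entrywise. Now the maximal cycle geometric mean is monotone for the entrywise order and homogeneous of degree one, both immediate from \eqref{eq:mu1}, so
\[
(1+\delta)\,\mu(\Psi_1)=(1+\delta)\,\mu(S_1)=\mu\big((1+\delta)S_1\big)\le\mu(S_2)=\mu(\Psi_2),
\]
whence $\mu(\Psi_1)\le\mu(\Psi_2)/(1+\delta)<\mu(\Psi_2)$ because $\delta>0$ and $\mu(\Psi_2)>0$.

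I expect the main subtlety — and the place where the hypotheses genuinely bite — to be the choice of perturbation direction: it is essential that the interior in the statement is taken relative to $\textrm{span}_{\otimes}(\textrm{conv}_{\otimes}(\Psi_2))$ rather than relative to $\mathbb{R}^{n\times n}$, since this is precisely what keeps the dilation $(1+\delta)S_1$ inside the ambient max-cone and hence, by the interior property, back inside $\textrm{conv}_{\otimes}(\Psi_2)$; the entrywise domination by $S_2$ and the monotonicity/homogeneity of the cycle mean then close the argument. The remaining points are routine, the only care required being the degenerate cases $S_1=0$ and (excluded by irreducibility) $\mu(\Psi_2)=0$.
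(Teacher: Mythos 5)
Your proposal is correct and follows essentially the same route as the paper: both reduce to the matrices $S_i=S(\Psi_i)$ via \eqref{eq:Sres}, use the relative-interior hypothesis to show that a dilated copy $(1+\delta)S_1$ still lies in $\textrm{conv}_{\otimes}(\Psi_2)$ and hence is dominated entrywise by $S_2$, and conclude by homogeneity and monotonicity of $\mu$. The only cosmetic difference is how the degenerate case is split off (you exclude $S_1=0$, the paper excludes $\mu(\Psi_1)=0$ and uses $\mu(S_1)>0$ in the final strict inequality, whereas you use $\mu(\Psi_2)>0$); both variants are sound.
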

\textbf{Proof:}  As $\mathcal{S}(\Psi_2)$ is irreducible, the matrix $S_2 = S(\Psi_2)$ is irreducible and hence $\mu(\Psi_2) > 0$.  Thus, the result is immediate if $\mu(\Psi_1) = 0$.  So we now assume that $\mu(\Psi_1) > 0$.

As $\Psi_1$ is compact, it follows that the matrix $S_1 = \bigoplus_{A \in \Psi_1} A$ lies in $\textrm{conv}(\Psi_1)$; thus $S_1 \in \textrm{int}_{\otimes}(\textrm{conv}(\Psi_2)) $.  We claim that there is some $\lambda > 1$ such that $\lambda S_1 \in \textrm{conv}(\Psi_2).$  To see this, note that $$\lambda S_1 \in \textrm{span}(\textrm{conv}(\Psi_2))$$ for all $\lambda > 0$.  Moreover, there is some $\delta > 0$ such that 
$$B(S_1, \delta) \cap \textrm{span}(\textrm{conv}(\Psi_2)) \subseteq \textrm{conv}(\Psi_2).$$
Clearly we can choose $\delta_1 > 0$ sufficiently small to ensure that with $\lambda = 1 + \delta_1$, $\lambda S_1 \in B(S_1, \delta)$.  As $\lambda S_1 \in \textrm{span}(\textrm{conv}(\Psi_2))$, it follows that $\lambda S_1 \in \textrm{conv}(\Psi_2)$ as claimed.  

It now follows readily that $$\lambda S_1 \leq S_2 = \bigoplus_{A \in \Psi_2} A$$ and hence that 
$$\mu(\Psi_1) = \mu(S_1) < \lambda \mu(S_1) \leq \mu(S_2) = \mu(\Psi_2)$$
as required. 

\textbf{Remark:}

In \cite{Wirth2}, it was shown that the joint spectral radius was a strictly increasing function in the sense that 
$$\textrm{conv}(\Psi_1) \subseteq \textrm{ri}(\textrm{conv}(\Psi_2))$$
and irreducibility of $\Psi_2$ imply that $\rho(\Psi_1) < \rho(\Psi_2)$. Here $\textrm{conv}$ denotes the conventional convex hull, $\textrm{ri}$ denotes the relative interior of a convex set (interior relative to its affine hull), and $\rho$ is the conventional joint spectral radius.  Our previous result is reminiscent of this fact.  However, our concept of $\textrm{int}_{\otimes}(C)$ is not an analogue of the relative interior.  In fact, it is possible for $\textrm{int}_{\otimes}(C)$ to be empty in contrast to the relative interior in classical convex analysis. 

\section{Concluding Remarks}
\label{sec:conc}
We have shown that a simple cone-theoretic irreducibility condition is sufficient for the existence of Barabanov norms in the max algebra and, moreover, have given an explicit description of the norm in this case.  We then used this result and the form of the norm to prove Lipschitz continuity of the max algebraic JSR with respect to the Hausdorff metric.  We have also extended results on the monotonicity of the JSR to the max algebraic setting.  An interesting direction for future work would be to investigate infinite dimensional extensions of these results to idempotent semi-modules. 

\section*{Acknowledgments}
The first author thanks the INdAM GNCS for financial support and also
Gran Sasso Science Institute (GSSI, L'Aquila).  The work of the second named author was partially supported by Science Foundation Ireland grant 13/RC/2094 and co-funded under the European Regional Development Fund through the Southern \& Eastern Regional Operational Programme  to Lero - the Irish Software Research Centre (www.lero.ie).

\end{document}